\newtheorem{theorem}{Theorem}
\newtheorem{proposition}[theorem]{Proposition}
\newcommand{\R}{{\mathbb{R}}}
\newcommand{\mF}{{\mathcal{F}}}
\newcommand{\mX}{{\mathcal{X}}}
\newcommand{\mY}{{\mathcal{Y}}}
\newcommand{\bx}{{\mathbf{x}}}
\newcommand{\by}{{\mathbf{y}}}
\newcommand{\hopmx}{x_{\text{HOPM}}}
\newcommand{\alsx}{x_{\text{ALS}}}
\newcommand{\bA}{{\mathbf{A}}}
\newcommand{\bB}{{\mathbf{B}}}
\newcommand{\bC}{{\mathbf{C}}}
\newcommand{\tenprod}{{\circ}}
\newcommand{\frob}{{\mathsf{F}}}
\newcommand{\abs}[1]{{\lvert #1 \rvert}}
\DeclareMathOperator{\spann}{span}
\DeclareMathOperator{\argmin}{argmin}
\begin{document}

\author[A. Uschmajew]{Andr\'e Uschmajew$^\ast$}
\address{\footnotesize\upshape${}^\ast$MATHICSE-ANCHP, Section de Math\'ematiques, \'Ecole Polytechnique F\'ed\'erale de Lausanne, 1015 Lausanne, Switzerland. Current address: Hausdorff Center for Mathematics \& Institute for Numerical Simulation, University of Bonn, 53115 Bonn, Germany (\texttt{uschmajew@ins.uni-bonn.de}).}
%\email{andre.uschmajew@epfl.ch}

\title[Convergence of the higher-order power method]{A new convergence proof for the higher-order power method and generalizations}

\begin{abstract}
A proof for the point-wise convergence of the factors in the higher-order power method for tensors towards a critical point is given. It is obtained by applying established results from the theory of \L{}ojasiewicz inequalities to the equivalent, unconstrained alternating least squares algorithm for best rank-one tensor approximation. 
\end{abstract}

\keywords{Tensors, rank-one approximation, higher-order power method, alternating least squares, global convergence, \L{}ojasiewicz inequality}

\subjclass[2010]{15A69, 49M20, 65K05, 68W25, 90C26}

\maketitle

\section{Introduction}

Finding the best rank-one approximation to a given higher-order tensor is equivalent to finding its largest tensor singular value (also known as its spectral norm), which is defined as the maximum of the associated multilinear form on the product of unit spheres. This simplest of all low-rank tensor approximation tasks is of large interest in its own, but also constitutes the main building-block when constructing approximations of higher rank by means of rank-one updates, see for example the references given in~\cite[Sec.~3.3]{GrasedyckKressnerTobler2013}.

% Depending on the viewpoint, the corresponding block coordinate optimization algorithm is called \emph{alternating least squares} (see~\cite{KoldaBader2009} and references therein) or \emph{higher-order power method}.

The \emph{higher-order power method} (HOPM)~\cite{DeLathauweretal1995,DeLathauweretal2000} is a simple, effective, and widely used optimization algorithm to approximately solve the task. The name comes from the fact that it is the straight-forward generalization of an alternating power method for finding a pair of dominant left and right singular vectors of a matrix. Depending on the scaling strategy used for the iterates during the process, the higher-order power method can be seen as an \emph{alternating least squares} (ALS) algorithm, see~\cite{KoldaBader2009} and references therein.

Despite its importance, a satisfactory convergence theory for the HOPM was missing until recently. Clearly, the convergence of the generated sequence of approximated singular values follows easily from the monotonicity of the method~\cite{RegaliaKofidis2000}. More interesting and important, however, is the question of single-point (and not just sub-sequential) convergence of the sequences of generated rank-one tensors or even their factors to a critical point of the problem. The local convergence for starting guesses close enough to a critical point was established in~\cite{GolubZhang2001} and~\cite{Uschmajew2012}, but the made assumptions remained somewhat restrictive. Concerning global convergence, the investigations of Mohlenkamp~\cite{Mohlenkamp2013} showed that the sequence of rank-one tensors generated by ALS is bounded, and that their consecutive differences are absolutely square summable and hence converge to zero. This would imply convergence of the method, if the set of cluster points, each of which must be a critical point, contains at least one isolated point which then is the limit. In a recent work by Wang and Chu~\cite{WangChu2014} this last issue was addressed by arguing that 
for almost every tensor the second-order derivative at zeros of the projected gradient of the cost function is regular, and hence critical points isolated. In this way, global convergence of the higher-order power method has been established, at least for almost every tensor.

The outlined argumentation appears, however, somewhat intricate. In this paper, we propose an alternative convergence proof based on an elegant method from the theory of analytical gradient flows, whose foundation is the \emph{\L{}ojasiewicz gradient inequality} -- a powerful feature of real-analytic functions. Simply speaking, the validity of this inequality at a cluster point of a gradient-related descent iteration enforces absolute summability of increments, which implies convergence~\cite{AbsilMahonyAndrews2005}. The continuous counterpart of this methodology is mentioned in~\cite{WangChu2014}, but the possibility to directly apply the available results on discrete gradient flows to ALS was not explored. This is what we shall do in the present paper.
% seemingly it has been overlooked that the discrete version may apply to ALS as well once normalization of the factors is omitted

In~\cite{XuYin2013}, Xu and Yin used a further generalization, the Kurdyka-\L{}ojasiewicz inequality, to obtain convergence results for a variety of cyclic block coordinate descent methods when applied to a large class of strongly block multiconvex functions. This includes a wide range of alternating block techniques for regularized low-rank tensor optimization tasks. In principle, our considerations will show that even \emph{without} regularization, the ALS algorithm for rank-one approximation is a member of this problem class. The key observation is an insight gained in~\cite{LiUschmajewZhang2013}, that the norms of the factors generated by ALS remain bounded from above and below, even when no normalization is used. In particular, norm constraints can be avoided in the analysis for this reason.

The focus on one specific method allows us to present the logic of the convergence proof in a simplified form compared to the very general reasoning in~\cite{XuYin2013}. As a result, we obtain the global convergence of the higher-order power method as the last link in a transparent chain of simple arguments. Admittedly, the abstract results based on the \L{}ojasiewicz gradient inequality, that are invoked at one point in the presentation, constitute a nontrivial ingredient in our proof, but they can be regarded as well-established by now.

The paper is organized as follows. In Sec.~\ref{sec: best rank-one approximation} we introduce the notation used, define the higher-order power method, and the equivalent alternating least squares algorithm. In Sec.~\ref{sec:convergence of ALS} we state the abstract convergence results from the literature on which we rely, and then prove that they can be applied to rank-one ALS. The main result is Theorem~\ref{th:main result}. Finally, Sec.~\ref{sec: generalization} is devoted to generalizations of the used arguments to strongly convex optimization tasks in other multilinear tensor formats by means of ALS-type algorithms~\cite{BeylkinMohlenkamp2005,HoltzRohwedderSchneider2012,KoldaBader2009,RohwedderUschmajew2013,OseledetsDolgov2012,Tobler2012}. We explain why for formats other than rank-one, regularization is typically unavoidable to achieve similar strong results.

\section{Best rank-one approximation}\label{sec: best rank-one approximation}

In this section, we recall the higher-order power method and the alternating least squares algorithm, and explain their connection in more detail.

\subsection{Preliminaries}
Let $d \ge 3$ and $n_1,n_2,\dots,n_d \in \mathbb{N}$ be given. The elements of the Cartesian product $\R^{n_1} \times \R^{n_2} \times \dots \times \R^{n_d}$ will be either explicitly denoted by tuples $(x^1,x^2,\dots,x^d)$, or abbreviated by
\[
\bx = (x^1,x^2,\dots,x^d).
\]
The elements in $\R^{n_1 \times n_2 \times \dots \times n_d}$ will be called \emph{tensors} and are treated as multi-dimensional arrays with entries labeled by multi-indices $i_1,i,\dots,i_d$. For tensors we use $\langle \cdot, \cdot \rangle_\frob$ and $\| \cdot \|_\frob$ to denote the Frobenius inner product and norm, respectively. For vectors, we omit the subscript $\frob$ when denoting the Euclidean inner product and norm. Similarly, the norm of a tuple $\bx$ will be denoted by $\| \bx \| = (\| x^1 \|^2 + \| x^2 \|^2 + \dots + \| x^d \|^2)^{1/2}$.

Consider the multilinear map
\(
\tau_1 \vcentcolon \R^{n_1} \times \R^{n_2} \times \dots \times \R^{n_d} \to \R^{n_1 \times n_2 \times \dots \times n_d}
\)
defined by
\begin{equation}\label{eq:rank-one tensors}
 \tau_1(\bx) = x^1 \tenprod x^2 \tenprod \cdots \tenprod x^d,
\end{equation}
where $\circ$ is the outer product. It means that $[\tau_1(\bx)]_{i_1,i_2,\dots,i_d} = x^1_{i_1} x^2_{i_2} \cdots x^d_{i_d}$. The non-zero tensors in the range of $\tau_1$ are called \emph{rank-one tensors}. The vectors $x^\mu$ will be called \emph{factors} of $\tau_1(\bx)$. A crucial property of $\tau_1$ is
\begin{equation}\label{eq:product of inner products}
\langle \tau_1(\bx), \tau_1(\by) \rangle_\frob = \langle x^1,y^1 \rangle \langle x^2,y^2\rangle \cdots \langle x^d,y^d \rangle,
\end{equation}
and therefore
\begin{equation}\label{eq: product of norms}
\| \tau_1(\bx) \|_\frob = \| x^1 \|^{} \|x^2 \|^{} \cdots \| x^d \|^{}.
\end{equation}

To a tensor $\mF \in \R^{n_1 \times n_2 \times \dots \times n_d}$ we associate a multilinear form $F$ defined as
\begin{align*}
F(\bx) = \langle \mF, \tau_1(\bx) \rangle_\frob 
= \sum_{i_1 = 1}^{n_1} \sum_{i_2 = 1}^{n_2} \cdots \sum_{i_d = 1}^{n_d} \mathcal{F}_{i_1,i_2,\dots,i_d}^{} x^1_{i_1} x^2_{i_2} \cdots x^d_{i_d}.
\end{align*}
For $\mu = 1,2,\dots,d$, we also define partial contractions $F^\mu(\bx)$ which are the vectors in $\R^{n_\mu}$, whose $i_\mu$th entry is
\[
\sum_{i_1 = 1}^{n_1} \cdots \sum_{i_{\mu-1} = 1}^{n_{\mu-1}} \sum_{i_{\mu+1} = 1}^{n_{\mu+1}} \cdots \sum_{i_d = 1}^{n_d} \mathcal{F}_{i_1,\dots,i_{\mu-1},i_\mu,i_{\mu+1},\dots,i_d}^{} x^1_{i_1} \cdots x^{\mu-1}_{i_{\mu-1}} x^{\mu+1}_{i_{\mu+1}} \cdots x^d_{i_d},
\]
that is, the contraction with $x^{\mu}$ is omitted. Equivalently, $F^\mu(\bx)$ may be defined as the unique vector in $\R^{n_\mu}$ satisfying
\begin{equation}\label{eq: mu contraction}
\langle F^\mu(\bx), x^\mu \rangle = F(\bx)
\end{equation}
for all $x^\mu$.

The algorithms we consider produce sequences iterates $(x_k^\mu)_k^{}$ for every component $\mu = 1,2,\dots,d$. We hence introduce the notation
\[
\bx_k^\mu = (x^1_k,\dots,x^\mu_k,x^{\mu+1}_{k-1},\dots,x^d_{k-1}).
\]
For convenience, let further $\bx_{k+1}^0 = \bx_k^{}$.

\subsection{Higher-order power method}
The critical values of $\bx \mapsto F(\bx) / (\|x^1\| \|x^2\| \cdots \|x^d\|)$ are called the \emph{singular values} of the tensor $\mF$~\cite{Lim2005}. The \emph{maximum singular value} is
\begin{equation}\label{eq:HOPM function}
\lambda^* = \max_{\|x^1\|^{} = \|x^2\|^{} = \dots = \| x^d \|^{} =1} F(\bx).
\end{equation}
This expression defines a norm  (the usual norm of a multilinear form), and so $\lambda^*$ is sometimes referred to as the \emph{spectral norm} of the underlying tensor $\mF$ in the literature. %When $d \ge 3$, it is NP-hard to compute $\lambda^*$. CITE...
The higher-order power method (HOPM) is a cyclic block coordinate method to approximate $\lambda^*$. By~\eqref{eq: mu contraction}, the optimal choice for $x^{\mu}$ when fixing the other factors is
\begin{equation}\label{eq:HOPM update}
\hopmx^\mu = \frac{F^\mu(\bx)}{\| F^\mu(\bx) \|^{}}. 
\end{equation}
This already constitutes the HOPM method summarized as Algorithm~\ref{Alg:HOPM}. For clarity we use the notation $\by_k^\mu$ for the iterates of HOPM, and reserve $\bx_k^\mu$ for the iterates produced by the ALS algorithm (Algorithm~\ref{Alg:ALS}) introduced next.

\begin{algorithm}
\DontPrintSemicolon
\KwIn{Tensor $\mF \in \R^{n_1 \times n_2 \times \dots \times n_d}$, starting guess $\by_0$ with $F^1(\by_0) \neq 0$.}
$k \leftarrow 0$, $\lambda_0 = F(\by_0)$\;
\While{not converged}{
\For{$\mu = 1,2,\dots,d$}{
$\displaystyle y^\mu_{k+1} = \frac{F^\mu(\by_{k+1}^{\mu-1})}{\|F^\mu(\by_{k+1}^{\mu-1})\|^{}}$\;
}
$\lambda_{k+1} = F(\by_{k+1})$\;
$k \leftarrow k+1$\;
}
\caption{Higher-order power method (HOPM)}\label{Alg:HOPM}
\end{algorithm}

Note that since $F(\by^\mu_k)$ is not decreasing and $F(\by_1^1) > 0$, a division by zero will never occur.

\subsection{Alternating least squares}

Given $\mF \in \R^{n_1 \times n_2 \times \dots \times n_d}$, let
\begin{equation}\label{eq:considered function}
f(\bx) = \frac12\| \mF - \tau_1(\bx) \|_\frob^2.
\end{equation}
The best rank-one approximation problem consists in finding a minimizer for $f$. The corresponding block coordinate descent method is called alternating least squares (ALS). The name comes from the fact that the problem for a single factor $x^\mu$ with the others held fixed is a least squares problem with normal equation
\begin{equation}\label{eq: normal equation}
\begin{aligned}
0 &= \langle \mF - \tau_1(\bx), \tau_1(x^1,\dots,x^{\mu-1},y^\mu,x^{\mu+1}, \dots, x^d) \rangle_\frob \\
&=\Bigg\langle F^\mu(\bx) - \Bigg(\prod_{\nu \neq \mu} \| x^\nu \|^2 \Bigg) x^\mu, y^\mu \Bigg\rangle \quad \text{for all $y^\mu \in \R^{n_\mu}$,}
\end{aligned}
\end{equation}
where we have used~\eqref{eq:product of inner products} and~\eqref{eq: mu contraction}. Assuming $x^\nu \neq 0$ for all $\nu \neq \mu$, the unique solution is
\begin{equation}\label{eq: ALS update}
\alsx^\mu = \frac{F^\mu(\bx)}{\prod_{\nu \neq \mu} \| x^\nu \|^2}.
\end{equation}
The resulting ALS algorithm is noted as Algorithm~\ref{Alg:ALS}.
\begin{algorithm}
\DontPrintSemicolon
\KwIn{Tensor $\mF \in \R^{n_1 \times n_2 \times \dots \times n_d}$, starting guess $\bx_0$ with $F^1(\bx_0) \neq 0$.}$k \leftarrow 0$\;
\While{not converged}{
\For{$\mu = 1,2,\dots,d$}{
$\displaystyle x^\mu_{k+1} = \frac{F^\mu(\bx^{\mu-1}_{k+1})}{\| x^1_{k+1} \|^2 \cdots \| x^{\mu-1}_{k+1} \|^2 \cdot \| x^{\mu+1}_k \|^2 \cdots \| x^d_k \|^2 }$\;
}
$k \leftarrow k+1$\;
}
\caption{Alternating Least Squares (ALS)}\label{Alg:ALS}
\end{algorithm}

Note that $F^1(\bx_0) \neq 0$ implies $x_0^\mu \neq 0$ for $\mu = 2,\dots,d$, so the very first step of the algorithm is feasible and $x_1^1 \neq 0$. As we show in the next section, the sequences $\| x^\mu_k \|$ are monotonically increasing for every $\mu$, so the subsequent update steps also never fail.

In contrast to what is recommended in practice (see e.g.~\cite{KoldaBader2009}), our version of ALS omits any normalization of the factors $x^\mu_k$ during the process. This is by purpose, as it simplifies the analysis. From a theoretical viewpoint it also makes no difference, as any rescaling strategy does not affect the generated sequence of subspaces $\spann(x^\mu_k)$. In particular, comparing~\eqref{eq:HOPM update} and~\eqref{eq: ALS update} it is plain to see the equivalence of HOPM and ALS, but the detailed proof requires some notational effort.  
\begin{proposition}\label{prop: equivalence ALS HOPM}
Let $(\lambda_k,\by_k)$ and $(\bx_k)$ denote the iterates generated by Algorithms~\ref{Alg:HOPM} and~\ref{Alg:ALS}, respectively, when applied to the same starting guess $\by_0 = \bx_0$. Then it holds
\[
y_k^\mu = \frac{x_k^\mu}{\|x_k^\mu\|}, \quad \text{and} \quad \lambda_k = \| \tau_1(\bx_k) \|_\frob 
\]
for all $k \ge 1$ and all $\mu$. Also, if $\bx_*$ is a critical point of the function~\eqref{eq:considered function} with $\tau_1(\bx_*) \neq 0$, then $\by_*$ with $y^\mu_* = x^\mu_* / \| x^\mu_* \|$ is a critical point (w.r.t. the spherical constraints) of~\eqref{eq:HOPM function}.
\end{proposition}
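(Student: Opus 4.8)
\emph{Proof strategy.} The plan is to exploit the single structural fact that makes HOPM and ALS coincide up to scaling of the factors: the partial contraction $F^\mu$ is multilinear in the arguments $x^\nu$, $\nu\neq\mu$, and is entirely independent of $x^\mu$. Hence replacing each $x^\nu$ ($\nu\neq\mu$) by a \emph{positive} multiple $c_\nu x^\nu$ turns $F^\mu(\bx)$ into $\bigl(\prod_{\nu\neq\mu}c_\nu\bigr)F^\mu(\bx)$, that is, into a positive multiple of itself. Comparing \eqref{eq:HOPM update} with \eqref{eq: ALS update}, the HOPM update returns the normalization of $F^\mu(\cdot)$ whereas the ALS update returns a fixed positive multiple of the very same vector; so whenever the tuples fed into the two updates agree up to positive rescalings of their individual factors, the HOPM iterate equals the normalized ALS iterate.

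I would make this precise by proving $y_k^\mu = x_k^\mu/\|x_k^\mu\|$ for all $k\ge1$ and all $\mu$ by induction on the outer counter $k$, with an inner induction over $\mu$. When the $\mu$-th factor is updated in the $(k{+}1)$-st sweep, the tuples $\by_{k+1}^{\mu-1}$ and $\bx_{k+1}^{\mu-1}$ carry, in slots $1,\dots,\mu-1$, the level-$(k{+}1)$ factors, which by the inner hypothesis already satisfy $y_{k+1}^\nu=x_{k+1}^\nu/\|x_{k+1}^\nu\|$, and in slots $\mu+1,\dots,d$ the level-$k$ factors, which satisfy $y_k^\nu=x_k^\nu/\|x_k^\nu\|$ by the outer hypothesis when $k\ge1$ and are literally equal when $k=0$ since $\by_0=\bx_0$. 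In every case the corresponding factors are positive multiples of one another, so by the observation above $F^\mu(\by_{k+1}^{\mu-1})$ is a positive multiple of $F^\mu(\bx_{k+1}^{\mu-1})$; since by \eqref{eq: ALS update} the new ALS iterate $x_{k+1}^\mu$ is itself a positive multiple of $F^\mu(\bx_{k+1}^{\mu-1})$, normalizing shows $y_{k+1}^\mu=x_{k+1}^\mu/\|x_{k+1}^\mu\|$ and closes the induction. The only delicate point is the sign bookkeeping: it is precisely what upgrades the obvious equality $\spann(x_k^\mu)=\spann(y_k^\mu)$ to the asserted equality of unit vectors, and it goes through because every scalar involved (squared norms, their reciprocals, products thereof) is strictly positive.

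The two remaining claims are then short consequences. For the value identity I would first observe that after a full sweep $F(\bx_k)=\prod_{\mu}\|x_k^\mu\|^2$: the last ALS update ($\mu=d$) reads $x_k^d\prod_{\nu<d}\|x_k^\nu\|^2=F^d(\bx_k^{d-1})=F^d(\bx_k)$, since $F^d$ ignores the $d$-th slot, and pairing with $x_k^d$ and using \eqref{eq: mu contraction} gives $F(\bx_k)=\langle F^d(\bx_k),x_k^d\rangle=\prod_\nu\|x_k^\nu\|^2$; full multilinearity of $F$ then yields $\lambda_k=F(\by_k)=\bigl(\prod_\mu\|x_k^\mu\|\bigr)^{-1}F(\bx_k)=\prod_\mu\|x_k^\mu\|=\|\tau_1(\bx_k)\|_\frob$ by \eqref{eq: product of norms}. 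For the critical-point correspondence, by \eqref{eq: normal equation} a point $\bx_*$ with $\tau_1(\bx_*)\neq0$ is critical for \eqref{eq:considered function} precisely when $F^\mu(\bx_*)=\bigl(\prod_{\nu\neq\mu}\|x_*^\nu\|^2\bigr)x_*^\mu$ for every $\mu$; multilinearity then gives $F^\mu(\by_*)=\bigl(\prod_{\nu\neq\mu}\|x_*^\nu\|\bigr)^{-1}F^\mu(\bx_*)=\bigl(\prod_{\nu\neq\mu}\|x_*^\nu\|\bigr)x_*^\mu=\|\tau_1(\bx_*)\|_\frob\,y_*^\mu\in\spann(y_*^\mu)$, which is exactly the first-order (Lagrange) optimality condition for $\by_*$ as a critical point of $F$ on the product of unit spheres, since $\nabla_{x^\mu}F(\by_*)=F^\mu(\by_*)$ by \eqref{eq: mu contraction}. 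I do not expect any step here to be an essential obstacle; the entire difficulty is the staggered-index and positivity bookkeeping of the induction in the second paragraph.
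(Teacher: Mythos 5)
Your proposal is correct and follows essentially the same route as the paper: a staggered induction over the pairs $(k,\mu)$ exploiting the multilinearity of $F^\mu$ and the strict positivity of all rescaling factors, followed by the value identity and the Lagrange/tangent-space argument for the critical-point correspondence. The only (harmless) variation is in the value identity, where you extract $F(\bx_k)=\prod_\nu\|x_k^\nu\|^2$ from the last normal equation of the sweep, whereas the paper derives $\|\tau_1(\bx_{k+1}^\mu)\|_\frob=F(\by_{k+1}^\mu)$ for every inner step; both yield $\lambda_k=\|\tau_1(\bx_k)\|_\frob$.
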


\begin{proof}
 We show by induction that for every $k \ge 0$ and $\mu$ there exists $\alpha_k^\mu > 0$ such that $x_k^\mu = \alpha_k^\mu y^k_\mu$. For $k \ge 1$ this obviously implies $\alpha_k^\mu = \|x_k^\mu\|$, as $\| y^\mu_k\|=1$ by construction. We introduce an ordering of the pairs $(k,\mu)$ according to their appearance in the algorithms, i.e., $(k,\mu) > (\ell,\nu)$ if $k>\ell$ or if $k = \ell$ and $\mu > \nu$. Setting $\alpha_0^\mu = 1$ the assertion $y_0^\mu = \alpha_0^\mu x_0^\mu$ obviously holds for all pairs $(k,\mu)$ with $k=0$. Now fix $(k+1,\mu)$ and assume the relation has been proved for all previous pairs. Exploiting the multilinearity of $F^\mu(\bx)$ w.r.t. $x^1 , \dots, x^{\mu-1} , x^{\mu+1}, \dots, x^d$, and using $\alpha_{k+1}^\nu = \|x_{k+1}^\nu\|$ for $\nu < \mu$, it holds
 \begin{equation}\label{eq: explicit relation}
 \begin{aligned}
  x_{k+1}^\mu =  \frac{\alpha^{\mu+1}_k  \cdots \alpha^d_kF^\mu(\by^{\mu-1}_{k+1})}{\alpha^1_{k+1} \cdots  \alpha^{\mu-1}_{k+1} \| x^{\mu+1}_k \|^2 \cdots \| x^d_k \|^2 } 
  = \frac{\alpha^{\mu+1}_k  \cdots \alpha^d_k \| F(\by^{\mu-1}_{k+1}) \| }{\alpha^1_{k+1} \cdots  \alpha^{\mu-1}_{k+1} \| x^{\mu+1}_k \|^2 \cdots \| x^d_k \|^2 } \cdot y^\mu_{k+1}
 \end{aligned}
 \end{equation}
 (note that $\alpha^{\mu+1}_k  \cdots \alpha^d_k$ and $\| x^{\mu+1}_k \|\cdots \| x^d_k \|$ also cancel once $k \ge 1$). Hence $\alpha^\mu_{k+1}$ equals the fraction on the right side of~\eqref{eq: explicit relation}, which is positive.
 
 Now that we have proved $x^\mu_k = \alpha^\mu_k y^\mu_k$ with $\alpha^\mu_k = \| x^\mu_k\|$ for all $k \ge 1$,~\eqref{eq: explicit relation} and~\eqref{eq: product of norms} imply
 \[
 \|\tau_1(\bx_{k+1}^\mu) \|_\frob =  \|F^\mu(\by^{\mu-1}_{k+1}) \|.
 \]
 By definition of $y_{k+1}^{\mu}$ and~\eqref{eq: mu contraction},
 \[
 \| F^\mu(\by^{\mu-1}_{k+1}) \| = \langle F^\mu(\by^{\mu-1}_{k+1}), y_{k+1}^{\mu} \rangle = \langle F^\mu(\by^{\mu}_{k+1}), y_{k+1}^{\mu} \rangle = F(\by^{\mu}_{k+1}),
 \]
 where the second equality holds because $F^\mu(\bx)$ never depends on $x^\mu$. In summary,
 \begin{equation}\label{eq: norm of taukmu}
 \|\tau_1(\bx_{k+1}^\mu) \|_\frob = F(\by^{\mu}_{k+1}),
 \end{equation}
 and in particular $\|\tau_1(\bx_{k+1}) \|_\frob = F(\by_{k+1}) = \lambda_{k+1}$.
 
 Finally, let $\bx_*$ be a critical point of the function~\eqref{eq:considered function} with $\tau_1(\bx_*) \neq 0$. Then $\bx^*$ is a stationary point of~\eqref{eq: ALS update}, so that for all $\mu$ it holds $x^\mu_* = \alpha_*^\mu F^\mu(\bx_*)$ with $\alpha^\mu_* = \prod_{\nu \neq \mu} \| x^\nu_* \|^2 \neq 0$. Let $y_*^\mu = x_*^\mu / \| x_*^\mu \|$, then using multilinearity it also follows that
 \begin{equation}\label{eq: stationarity for y}
 \beta_*^\mu y^\mu_* = F^\mu(\by_*)
 \end{equation}
 for some $\beta_*^\mu \neq 0$. The tangent space to the unit sphere at $y_*^\mu$ consists of all vectors $\delta y^\mu_*$ orthogonal to $y_*^\mu$. Hence $\by_*$ is a critical point of $F$ on the Cartesian product of spheres, if for every $\mu$ it holds $\langle \nabla_\mu F(\by_*), \delta y^\mu_* \rangle = 0$ for all such $\delta y^\mu_*$. But since $F$ is linear with respect to every block variable, this is the case, as
 \[
 \langle \nabla_\mu F(\by_*), \delta y^\mu_* \rangle = \langle F^\mu(\by_*), \delta y^\mu_* \rangle = \beta_*^\mu \langle y_*^\mu , \delta y^\mu_* \rangle = 0
 \]
 by~\eqref{eq: mu contraction} and~\eqref{eq: stationarity for y}. 
\end{proof}

As a result, we can prove convergence of HOPM by proving convergence of ALS.

\section{Convergence of alternating least squares}\label{sec:convergence of ALS}

The global, point-wise convergence of the iterates generated by Algorithm~\ref{Alg:ALS} will be deduced from known results based on the \L{}ojasiewicz gradient inequality. We first recall the required abstract properties, and then show that they hold for Algorithm~\ref{Alg:ALS}.

\subsection{Point-wise convergence via \L{}ojasiewicz inequality}
Our aim is to apply the following result~\cite[Theorem~3.2]{AbsilMahonyAndrews2005}.

\begin{theorem}\label{th:convergence via Lojasiewicz}
Let $f \vcentcolon V \to \R$ be a real-analytic function on a finite-dimensional real vector space $V$, and let $(\bx_k) \subset \R^{n}$ be a sequence satisfying
\begin{equation}\label{eq: minimum decrease}
f(\bx_k) - f(\bx_{k+1}) \ge \sigma \| \nabla f(\bx_k) \|^{} \| \bx_{k+1} - \bx_k \|^{}
\end{equation}
for all large enough $k$ and some $\sigma > 0$. Assume further that the implication
\begin{equation}\label{eq: stationarity condition}
[f(\bx_{k+1}) = f(\bx_k)] \quad \Rightarrow \quad \bx_{k+1} = \bx_k
\end{equation}
holds. Then a cluster point $\bx_*$ of the sequence $(\bx_k)$ must be its limit. In particular, if the sequence is bounded, it is convergent.
\end{theorem}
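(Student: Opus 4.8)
The plan is to run the by-now standard \emph{\L{}ojasiewicz trapping} argument, adapted to the discrete descent condition~\eqref{eq: minimum decrease}. Fix a cluster point $\bx_*$ of $(\bx_k)$ and set $f_* = f(\bx_*)$. First I would dispose of the degenerate case. By~\eqref{eq: minimum decrease} the values $f(\bx_k)$ are nonincreasing for all large $k$, hence convergent, and continuity together with the cluster-point property forces the limit to be $f_*$. If $f(\bx_{k_0}) = f_*$ for some large $k_0$, then monotonicity gives $f(\bx_k) = f_*$ for every $k \ge k_0$, and~\eqref{eq: stationarity condition} then yields $\bx_{k+1} = \bx_k$ for all $k \ge k_0$; thus $(\bx_k)$ is eventually constant, and an eventually constant sequence having $\bx_*$ as a cluster point must equal $\bx_*$ from $k_0$ on. From here on I may therefore assume $f(\bx_k) > f_*$ for all large $k$.

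The central tool is the \L{}ojasiewicz gradient inequality for the real-analytic $f$ at $\bx_*$: there exist constants $C > 0$, $\theta \in (0, \tfrac12]$ and a radius $\rho_0 > 0$ such that $(f(\bx) - f_*)^{1-\theta} \le C \|\nabla f(\bx)\|$ whenever $\|\bx - \bx_*\| \le \rho_0$ and $f(\bx) \ge f_*$. I would pair this with the concave, increasing desingularizing function $\phi(t) = \tfrac{1}{\theta C}(t - f_*)^{\theta}$, $t \ge f_*$ (so that $\phi(f_*) = 0$ and $\phi$ is continuous at $f_*$), whose derivative is $\phi'(t) = \tfrac1C (t - f_*)^{\theta-1}$. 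For any large $k$ with $\bx_k$ in the closed ball $B = \{\bx : \|\bx - \bx_*\| \le \rho_0\}$, one combines three facts: concavity of $\phi$, that is, $\phi(f(\bx_k)) - \phi(f(\bx_{k+1})) \ge \phi'(f(\bx_k))\,(f(\bx_k) - f(\bx_{k+1}))$; the minimum-decrease~\eqref{eq: minimum decrease}; and $\phi'(f(\bx_k)) \ge \big(C\|\nabla f(\bx_k)\|\big)^{-1}$, which is just the \L{}ojasiewicz inequality rearranged. The gradient norms cancel and one arrives at the one-step bound
\[
\|\bx_{k+1} - \bx_k\| \le \frac{C^2}{\sigma}\,\big(\phi(f(\bx_k)) - \phi(f(\bx_{k+1}))\big).
\]
(If $\nabla f(\bx_k) = 0$ then~\eqref{eq: minimum decrease} would force $f(\bx_{k+1}) = f(\bx_k)$, a case already excluded, so the divisions above are legitimate.)

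The heart of the argument, and the step I expect to be the main obstacle, is the trapping induction: showing that once some iterate lies close enough to $\bx_*$, all later iterates remain in $B$, so that the one-step bound applies from then on and telescopes. Since $f(\bx_k) \to f_*$ for all large $k$ and $\phi$ is continuous at $f_*$ with $\phi(f_*) = 0$, while $\bx_*$ is the limit of a subsequence $(\bx_{k_j})$, I would pick $K = k_j$ for $j$ large enough that~\eqref{eq: minimum decrease} already holds from index $K$ onward and
\[
\|\bx_K - \bx_*\| + \frac{C^2}{\sigma}\,\phi(f(\bx_K)) < \rho_0.
\]
An induction on $k \ge K$ then shows $\bx_k \in B$: assuming $\bx_K, \dots, \bx_k \in B$, the one-step bound holds for indices $K, \dots, k$ and telescopes to $\sum_{j=K}^{k}\|\bx_{j+1} - \bx_j\| \le \tfrac{C^2}{\sigma}\big(\phi(f(\bx_K)) - \phi(f(\bx_{k+1}))\big) \le \tfrac{C^2}{\sigma}\phi(f(\bx_K))$, whence $\|\bx_{k+1} - \bx_*\| \le \|\bx_K - \bx_*\| + \tfrac{C^2}{\sigma}\phi(f(\bx_K)) < \rho_0$. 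Consequently $\sum_{k \ge K}\|\bx_{k+1} - \bx_k\| < \infty$, so $(\bx_k)$ is a Cauchy sequence and converges; since $\bx_*$ is one of its cluster points, the limit is $\bx_*$. The final assertion follows at once: a bounded sequence has a cluster point by Bolzano--Weierstrass, and any cluster point has just been shown to be the limit.
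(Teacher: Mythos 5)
Your proof is correct and is precisely the standard \L{}ojasiewicz ``trapping'' argument of Absil--Mahony--Andrews, which the paper does not reproduce but merely cites as its Theorem~3.2 together with the one-sentence sketch that the gradient inequality combined with~\eqref{eq: minimum decrease} and~\eqref{eq: stationarity condition} yields summability of the increments; your degenerate-case reduction, one-step estimate, and induction keeping the tail in the ball $B$ are all sound (and your final constant $C^2/\sigma$ is the right one, even though the intermediate line $\phi'(f(\bx_k)) \ge (C\|\nabla f(\bx_k)\|)^{-1}$ should read $\ge (C^2\|\nabla f(\bx_k)\|)^{-1}$ with your normalization of $\phi$). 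One small repair: the parenthetical claim that $\nabla f(\bx_k)=0$ together with~\eqref{eq: minimum decrease} would force $f(\bx_{k+1})=f(\bx_k)$ is not valid, since the inequality then only gives $f(\bx_k)-f(\bx_{k+1})\ge 0$; the correct justification is that for $\bx_k\in B$ with $f(\bx_k)>f_*$ the \L{}ojasiewicz inequality itself yields $0<(f(\bx_k)-f_*)^{1-\theta}\le C\|\nabla f(\bx_k)\|$, so the gradient cannot vanish there and the division is legitimate.
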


The key ingredient in the proof of this theorem is the \L{}ojasiewicz gradient inequality,
\begin{equation}\label{eq:Lojasiewicz inequality}
\abs{f(\bx) - f(\bx_*)}^{1-\theta} \le \Lambda \| \nabla f(\bx) \|^{}, 
\end{equation}
which can be shown to hold in some (unknown) neighborhood of $\bx_*$ when $f$ is real-analytic~\cite[p.~92]{Lojasiewicz1965}. The constants $\Lambda > 0$ and $\theta \in (0,1/2]$ are typically not explicitly known as well. Yet, in combination with~\eqref{eq: minimum decrease} and~\eqref{eq: stationarity condition}, the \L{}ojasiewicz gradient inequality allows to prove that the norms $\| \bx_{k+1} - \bx_k \|$ of increments are summable, which proves convergence of the sequence $(\bx_k)$.

Under stronger conditions, one can conclude that the limit is a critical point of $f$. The following theorem will be applicable to Algorithm~\ref{Alg:ALS}, although the convergence rate estimates remain of minor use as long as no a-priori results on the expected value of the \L{}ojasiewicz exponent $\theta$ are available.

\begin{theorem}\label{th:convergence rate}
Under the conditions of Theorem~\ref{th:convergence via Lojasiewicz}, assume further that there exists $\kappa > 0$ such that
\begin{equation}\label{eq:minimum stepsize safeguard}
\| \bx_{k+1} - \bx_k \|^{} \ge \kappa \| \nabla f(\bx_k) \|
\end{equation}
for all large enough $k$. Then $\nabla f (\bx_*) = 0$, and the convergence rate can be estimated as follows:
\begin{equation}\label{eq:rate}
\| \bx_* - \bx_k \| \lesssim \begin{cases} q^{k} \quad &\text{if $\theta = \frac{1}{2}$ (for some $0 < q < 1$)}, \\ k^{- \frac{\theta }{1 -2 \theta }} \quad &\text{if $0<\theta<\frac{1}{2}$.}  \end{cases}
\end{equation}

\end{theorem}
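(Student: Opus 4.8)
The plan is to first upgrade the conclusion of Theorem~\ref{th:convergence via Lojasiewicz} (which already supplies $\bx_k \to \bx_*$) to criticality of the limit, then to run the standard \L{}ojasiewicz length argument to obtain a bound of the form $\|\bx_* - \bx_k\| \lesssim (f(\bx_k)-f(\bx_*))^\theta$, and finally to turn this into an explicit rate by integrating a scalar recursion for the energy gap $r_k := f(\bx_k) - f(\bx_*)$.

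First I would show $\nabla f(\bx_*) = 0$. By Theorem~\ref{th:convergence via Lojasiewicz} the sequence converges to the cluster point $\bx_*$, so $\|\bx_{k+1} - \bx_k\| \to 0$; combining this with~\eqref{eq:minimum stepsize safeguard} gives $\|\nabla f(\bx_k)\| \le \kappa^{-1}\|\bx_{k+1}-\bx_k\| \to 0$, and continuity of $\nabla f$ yields $\nabla f(\bx_*) = 0$. Since $\bx_*$ is now a critical point, the \L{}ojasiewicz inequality~\eqref{eq:Lojasiewicz inequality} holds on some neighbourhood $U$ of $\bx_*$ with constants $\Lambda > 0$ and $\theta \in (0,1/2]$. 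As the right-hand side of~\eqref{eq: minimum decrease} is nonnegative, $f(\bx_k)$ is eventually non-increasing, and by continuity $f(\bx_k) \downarrow f(\bx_*)$; discarding finitely many initial terms, I may assume that~\eqref{eq: minimum decrease},~\eqref{eq:minimum stepsize safeguard}, and $\bx_k \in U$ all hold for every $k$, so that $r_k \ge 0$ for all $k$. If $r_k = 0$ for some $k$, then~\eqref{eq: minimum decrease} forces $f$ to be constant from index $k$ on and~\eqref{eq: stationarity condition} makes the sequence constant and equal to $\bx_*$, making the rate estimate trivial; hence assume $r_k > 0$ for all $k$.

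Next comes the length argument. Inserting the \L{}ojasiewicz lower bound $\|\nabla f(\bx_k)\| \ge \Lambda^{-1} r_k^{1-\theta}$ into~\eqref{eq: minimum decrease} gives $\|\bx_{k+1}-\bx_k\| \le \tfrac{\Lambda}{\sigma}\, r_k^{\theta-1}(r_k - r_{k+1})$, and concavity of $t \mapsto t^\theta$ on $[0,\infty)$ yields $r_k^{\theta-1}(r_k - r_{k+1}) \le \theta^{-1}(r_k^\theta - r_{k+1}^\theta)$; telescoping and the triangle inequality then produce
\[
\|\bx_* - \bx_k\| \le \sum_{j \ge k}\|\bx_{j+1}-\bx_j\| \le \frac{\Lambda}{\sigma\theta}\, r_k^\theta .
\]
It remains to bound $r_k$. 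Chaining~\eqref{eq: minimum decrease}, then~\eqref{eq:minimum stepsize safeguard}, and then the \L{}ojasiewicz bound twice,
\[
r_k - r_{k+1} \ge \sigma \|\nabla f(\bx_k)\|\, \|\bx_{k+1}-\bx_k\| \ge \sigma\kappa\,\|\nabla f(\bx_k)\|^2 \ge c\, r_k^{2-2\theta},\qquad c := \sigma\kappa/\Lambda^2 > 0 .
\]
For $\theta = \tfrac12$ this reads $r_{k+1} \le (1-c)r_k$ with necessarily $c \in (0,1)$ (forced by $r_{k+1} \ge 0$), so $r_k$ decays geometrically and $\|\bx_* - \bx_k\| \lesssim r_k^{1/2} \lesssim q^{k}$. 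For $0 < \theta < \tfrac12$, set $\beta := 2-2\theta \in (1,2)$; convexity of $t \mapsto t^{-(\beta-1)}$ together with $r_{k+1}\le r_k$ gives $r_{k+1}^{-(\beta-1)} - r_k^{-(\beta-1)} \ge (\beta-1) r_k^{-\beta}(r_k - r_{k+1}) \ge c(\beta-1)$, hence $r_k^{-(\beta-1)} \gtrsim k$, i.e.\ $r_k \lesssim k^{-1/(1-2\theta)}$, and therefore $\|\bx_* - \bx_k\| \lesssim r_k^\theta \lesssim k^{-\theta/(1-2\theta)}$.

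The one step that needs genuine care is the bookkeeping behind ``for all large enough $k$'': the \L{}ojasiewicz neighbourhood $U$ can only be fixed \emph{after} $\bx_*$ has been identified as a critical point, and one then has to select a single index beyond which the iterates lie in $U$ and both~\eqref{eq: minimum decrease} and~\eqref{eq:minimum stepsize safeguard} are valid, while treating the degenerate case $r_k = 0$ separately. Everything else is the standard concavity/convexity manipulation of the energy gap $r_k$, and the two regimes in~\eqref{eq:rate} are simply the two ways the recursion $r_k - r_{k+1} \gtrsim r_k^{2-2\theta}$ can be integrated (exponent equal to $1$ versus strictly greater than $1$).
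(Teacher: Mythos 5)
Your argument is correct, and it is precisely the standard \L{}ojasiewicz rate argument (criticality of the limit from \eqref{eq:minimum stepsize safeguard}, the length estimate $\|\bx_*-\bx_k\|\lesssim r_k^\theta$ via concavity of $t\mapsto t^\theta$, and integration of the recursion $r_k-r_{k+1}\gtrsim r_k^{2-2\theta}$) that the paper itself does not reprove but instead delegates to the cited references, in particular to the version proved in \cite{SchneiderUschmajew2014}. Your handling of the bookkeeping (fixing the \L{}ojasiewicz neighbourhood only after establishing $\nabla f(\bx_*)=0$, and treating the degenerate case $r_k=0$ via \eqref{eq: stationarity condition}) is exactly the care that argument requires, so there is nothing to add.
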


We were not able to identify the original reference for this theorem which seems rather scattered through the literature, see e.g.~\cite{AttouchBolte2009,Levitt2012}. For concreteness we point to~\cite{SchneiderUschmajew2014}, where Theorems~\ref{th:convergence via Lojasiewicz} and~\ref{th:convergence rate} are proved in the stated form.

\subsection{Application to Algorithm~\ref{Alg:ALS}}\label{sec:application to ALS}

Let now $f$ be the function~\eqref{eq:considered function} again, and $(\bx_k)$ the sequence generated by Algorithm~\ref{Alg:ALS}. By a chain of simple arguments, we will show that the required properties~\eqref{eq: minimum decrease},~\eqref{eq: stationarity condition}, and~\eqref{eq:minimum stepsize safeguard} are satisfied. As $F^1(\bx_0)$ only depends on $x_0^2,\dots,x_0^d$, we assume now without loss in generality that $x_0^1 = x_1^1$ to avoid special treatment of the very first update in the following proofs.

The first two results are well-known, and express the monotonicity of the algorithms.%and for instance present in~\cite{Mohlenkamp2013}.

\begin{proposition}\label{prop:necessary optimality condition}
For all $k \ge 1$ and $\mu = 1,2,\dots,d$ it holds
\[
\| \mF \|_\frob^2 = \| \tau_1(\bx_k^\mu)\|^2_\frob + \| \mF - \tau_1 (\bx_k^\mu) \|_\frob^2.
\]
\end{proposition}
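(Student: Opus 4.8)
The plan is to read the claimed equality as the Pythagorean identity attached to the least‑squares update and reduce it to a single scalar statement. Expanding the square gives
\[
\|\mF - \tau_1(\bx_k^\mu)\|_\frob^2 = \|\mF\|_\frob^2 - 2\langle \mF, \tau_1(\bx_k^\mu)\rangle_\frob + \|\tau_1(\bx_k^\mu)\|_\frob^2,
\]
so the assertion is equivalent to $\langle \mF, \tau_1(\bx_k^\mu)\rangle_\frob = \|\tau_1(\bx_k^\mu)\|_\frob^2$, that is, to $F(\bx_k^\mu) = \|\tau_1(\bx_k^\mu)\|_\frob^2$ by the definition of the multilinear form $F$. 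Everything therefore comes down to showing this last identity.

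For this I would invoke the fact that, for $k \ge 1$, the $\mu$th entry of the tuple $\bx_k^\mu$ is precisely the ALS update of that block, so by the normal equation~\eqref{eq: normal equation} it satisfies
\[
F^\mu(\bx_k^\mu) = \Big(\prod_{\nu \neq \mu}\|x^\nu\|^2\Big)\,x_k^\mu,
\]
where the product ranges over the remaining (fixed) factors of $\bx_k^\mu$ and we used that $F^\mu$ never depends on its $\mu$th argument. Taking the inner product of both sides with $x_k^\mu$, the left‑hand side equals $F(\bx_k^\mu)$ by~\eqref{eq: mu contraction}, while the right‑hand side equals $\prod_{\nu}\|x^\nu\|^2 = \|\tau_1(\bx_k^\mu)\|_\frob^2$ by~\eqref{eq: product of norms}. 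This is exactly the identity $F(\bx_k^\mu) = \|\tau_1(\bx_k^\mu)\|_\frob^2$, and plugging it into the expansion above finishes the argument.

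There is no genuinely hard step here; the only things to watch are bookkeeping. One must ensure the update defining $x_k^\mu$ is well posed, i.e. that the other factors appearing in $\bx_k^\mu$ are nonzero — this is the feasibility already recorded after Algorithm~\ref{Alg:ALS} ($F^1(\bx_0)\neq 0$ starts the induction, and each subsequent step inherits it). The convention $x_0^1 = x_1^1$ fixed just above lets the case $\mu = 1$ be handled on the same footing as $\mu \ge 2$. Finally, in the degenerate case $F^\mu(\bx_k^{\mu-1}) = 0$ one has $x_k^\mu = 0$, and then both $F(\bx_k^\mu)$ and $\|\tau_1(\bx_k^\mu)\|_\frob$ vanish, so the identity — and with it the proposition — holds trivially.
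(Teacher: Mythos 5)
Your proof is correct and is essentially the paper's own argument in its concrete form: the paper reduces the identity to the orthogonality relation $\langle \mF, \tau_1(\bx_k^\mu)\rangle_\frob = \|\tau_1(\bx_k^\mu)\|_\frob^2$ obtained by testing the normal equation~\eqref{eq: normal equation} with $y^\mu = x_k^\mu$, exactly as you do. Your extra remarks on feasibility and the degenerate case $F^\mu(\bx_k^{\mu-1})=0$ are harmless and, if anything, slightly more careful than the paper's one-line justification.
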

\begin{proof}
This is a necessary optimality condition for the least squares problem that was solved to obtain $x^\mu_k$, since, by homogeneity, $\tau_1(\bx_k^\mu)$ is in particular the Euclidean best approximation of $\mF$ in $\spann(\tau_1(\bx_k^\mu))$. (More concretely, it follows from choosing $x^\mu = y^\mu = \alsx^\mu$ in~\eqref{eq: normal equation} that $F(\bx_n^\mu) = \| \tau_1(\bx_n^\mu) \|_\frob$.)
\end{proof}

\begin{proposition}\label{prop:increasing norm of tau}
For $\nu \ge \mu$ and $\ell \ge k$ it holds
\[
\| \tau_1 (\bx_\ell^\nu) \|_\frob \ge \| \tau_1(\bx_k^\mu) \|_\frob.
\]
\end{proposition}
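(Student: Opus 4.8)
The plan is to combine the monotonicity of $f$ along the ALS iterates with the Pythagorean identity already recorded in Proposition~\ref{prop:necessary optimality condition}. The point is that the two propositions together convert "the residual $\|\mF-\tau_1(\bx)\|_\frob$ decreases" into "the norm $\|\tau_1(\bx)\|_\frob$ increases".

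First I would note that each inner update in Algorithm~\ref{Alg:ALS} is, by construction (the normal equation~\eqref{eq: normal equation} and its solution~\eqref{eq: ALS update}), the \emph{exact} minimizer of the quadratic map $x^\mu \mapsto f(x^1_{k+1},\dots,x^{\mu-1}_{k+1},x^\mu,x^{\mu+1}_k,\dots,x^d_k)$. Hence $f(\bx_{k+1}^\mu) \le f(\bx_{k+1}^{\mu-1})$ for every $k$ and every $\mu$, and since $\bx_{k+1}^0 = \bx_k = \bx_k^d$, the numbers $f(\bx_k^\mu)$ form a non-increasing sequence with respect to the ordering of the pairs $(k,\mu)$ introduced in the proof of Proposition~\ref{prop: equivalence ALS HOPM}. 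In particular, when $\ell \ge k$ and $\nu \ge \mu$ one has $(\ell,\nu) \ge (k,\mu)$ in that ordering, so $f(\bx_\ell^\nu) \le f(\bx_k^\mu)$, i.e.\ $\| \mF - \tau_1(\bx_\ell^\nu) \|_\frob \le \| \mF - \tau_1(\bx_k^\mu) \|_\frob$.

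Next I would invoke Proposition~\ref{prop:necessary optimality condition} twice, once at $\bx_\ell^\nu$ and once at $\bx_k^\mu$, to write $\| \tau_1(\bx_\ell^\nu) \|_\frob^2 = \| \mF \|_\frob^2 - \| \mF - \tau_1(\bx_\ell^\nu) \|_\frob^2$ and likewise for $\bx_k^\mu$; subtracting and using the residual inequality from the previous step gives $\| \tau_1(\bx_\ell^\nu) \|_\frob^2 \ge \| \tau_1(\bx_k^\mu) \|_\frob^2$, which is the claim. There is no substantive obstacle here; the only things needing a line of care are (i) that the hypotheses $\ell \ge k$, $\nu \ge \mu$ do place $(\ell,\nu)$ at or after $(k,\mu)$ in the ordering, so the monotone decrease of $f$ applies, and (ii) that Proposition~\ref{prop:necessary optimality condition} is available for both arguments, which holds once the relevant indices are $\ge 1$, the borderline case $k=0$ being covered by the convention $x_0^1 = x_1^1$ adopted at the start of Section~\ref{sec:application to ALS}, under which $\bx_0 = \bx_1^1$.
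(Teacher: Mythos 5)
Your proof is correct and follows essentially the same route as the paper: the monotone decrease of $f$ along the inner updates gives $\| \mF - \tau_1(\bx_\ell^\nu) \|_\frob \le \| \mF - \tau_1(\bx_k^\mu) \|_\frob$, and Proposition~\ref{prop:necessary optimality condition} converts this into the claimed inequality for $\| \tau_1(\cdot) \|_\frob$. The paper's proof is just a terser version of the same argument (it also mentions, as an alternative, deducing the statement from~\eqref{eq: norm of taukmu} and the monotonicity of HOPM).
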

\begin{proof}
This is an immediate consequence of Proposition~\ref{prop:necessary optimality condition}, as, by the decreasing property of ALS, $\| \mF - \tau_1 (\bx_\ell^\nu) \|_\frob^2 \le \| \mF - \tau_1 (\bx_k^\mu) \|_\frob^2$. Alternatively, the statement follows from~\eqref{eq: norm of taukmu} and the monotonicity of HOPM.
\end{proof}

The next two key conclusions were drawn in~\cite[Lemma~4.1]{LiUschmajewZhang2013}. The first is as crucial as it is trivial.

\begin{proposition}\label{prop:norm monotonically increasing}
For every $\mu = 1,2,\dots,d$ the sequence $(\| x^\mu_k \|)$ of norms is monotonically increasing.
\end{proposition}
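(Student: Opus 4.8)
The plan is to read the monotonicity off directly from the multiplicative identity~\eqref{eq: product of norms} for the Frobenius norm of a rank-one tensor, together with the monotonicity of $\|\tau_1(\bx_k^\mu)\|_\frob$ already recorded in Proposition~\ref{prop:increasing norm of tau}. The structural observation that makes this work is that the configurations $\bx_{k+1}^{\mu-1}$ and $\bx_{k+1}^{\mu}$ differ in exactly one slot, the $\mu$-th, where $x^\mu_k$ has been replaced by $x^\mu_{k+1}$, while all other factors coincide.

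Concretely, for $\mu\ge 2$ one writes, using~\eqref{eq: product of norms},
\[
\|\tau_1(\bx_{k+1}^{\mu})\|_\frob
= \Big(\textstyle\prod_{\nu<\mu}\|x^\nu_{k+1}\|\Big)\,\|x^\mu_{k+1}\|\,\Big(\textstyle\prod_{\nu>\mu}\|x^\nu_k\|\Big),
\]
and the same expression for $\|\tau_1(\bx_{k+1}^{\mu-1})\|_\frob$ but with $\|x^\mu_{k+1}\|$ replaced by $\|x^\mu_k\|$. Proposition~\ref{prop:increasing norm of tau}, applied with $(k+1,\mu)\ge(k+1,\mu-1)$, gives $\|\tau_1(\bx_{k+1}^{\mu})\|_\frob\ge\|\tau_1(\bx_{k+1}^{\mu-1})\|_\frob$, and cancelling the common factor $\big(\prod_{\nu<\mu}\|x^\nu_{k+1}\|\big)\big(\prod_{\nu>\mu}\|x^\nu_k\|\big)$ — which is strictly positive, being exactly the denominator guaranteeing feasibility of the ALS update~\eqref{eq: ALS update} that produced $x^\mu_{k+1}$ — yields $\|x^\mu_{k+1}\|\ge\|x^\mu_k\|$. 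For $\mu=1$ one argues verbatim after comparing $\bx_{k+1}^1$ with $\bx_{k+1}^{0}=\bx_k=\bx_k^d$, which is permissible since $\bx_{k+1}^1$ is produced after $\bx_k^d$ in the algorithm, so that the estimate underlying Proposition~\ref{prop:increasing norm of tau} (the monotone decrease of $\|\mF-\tau_1(\cdot)\|_\frob$ combined with Proposition~\ref{prop:necessary optimality condition}) still applies.

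The only mild subtleties — and thus the closest thing to an obstacle — are the nonvanishing of the cancelled product, i.e.\ the running feasibility of Algorithm~\ref{Alg:ALS} (guaranteed by $F^1(\bx_0)\neq 0$, as noted after the algorithm), and the purely clerical boundary case $\mu=1$. All the substantive content has already been extracted in Propositions~\ref{prop:necessary optimality condition} and~\ref{prop:increasing norm of tau}, which is why the statement is as easy as it is useful.
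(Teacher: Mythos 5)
Your argument is exactly the paper's: the paper's one-line proof also invokes Proposition~\ref{prop:increasing norm of tau} together with the product formula~\eqref{eq: product of norms} and the fact that only one block changes per inner step, and you have merely written out the cancellation and the feasibility of the boundary case explicitly. Correct, same approach.
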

\begin{proof}
As in every inner step of Algorithm~\ref{Alg:ALS} only one block is updated, this follows from Proposition~\ref{prop:increasing norm of tau} and~\eqref{eq: product of norms}.
\end{proof}

As a result, the norms of the factors $x_k^\mu$ remain bounded from below \emph{and} from above. 

% This property strongly distinguishes the ALS algorithm for rank-one approximation from ALS versions for tensor formats of higher rank (see Sec.~\ref{sec: generalization}).

\begin{proposition}\label{prop:key observation}
For all $k \ge 1$ and $\mu = 1,2,\dots,d$ it holds
\[
0 < \| x^\mu_0 \| \le \| x^\mu_k \| \le \| \mF \|_\frob^{} \Bigg(\prod_{\nu \neq \mu} \| x^\nu_0 \|^{-1} \Bigg).
\]
\end{proposition}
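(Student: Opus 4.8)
The plan is to establish the lower bound from Proposition~\ref{prop:norm monotonically increasing} and the upper bound from the ALS update formula together with the monotonicity of $\|\tau_1(\bx_k)\|_\frob$. The lower bound $\|x_0^\mu\| \le \|x_k^\mu\|$ is immediate: by Proposition~\ref{prop:norm monotonically increasing}, each sequence $(\|x_k^\mu\|)_k$ is monotonically increasing, and $\|x_0^\mu\| > 0$ holds because $F^1(\bx_0) \neq 0$ forces $x_0^\nu \neq 0$ for $\nu \neq 1$, while our convention $x_0^1 = x_1^1 \neq 0$ (or the original feasibility remark) covers $\mu = 1$.

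For the upper bound, I would start from the ALS update~\eqref{eq: ALS update} in the form producing $x_k^\mu$, namely
\[
x_k^\mu = \frac{F^\mu(\bx_k^{\mu-1})}{\prod_{\nu \neq \mu}\|x^\nu\|^2},
\]
where the norms in the denominator are the current ones ($\|x_k^\nu\|$ for $\nu < \mu$ and $\|x_{k-1}^\nu\|$ for $\nu > \mu$). Taking norms and using~\eqref{eq: norm of taukmu}-type reasoning — more directly, $\|\tau_1(\bx_k^\mu)\|_\frob = \|x_k^\mu\|\prod_{\nu\neq\mu}\|x^\nu\|$ by~\eqref{eq: product of norms}, combined with $\|F^\mu(\bx_k^{\mu-1})\| = \|\tau_1(\bx_k^\mu)\|_\frob$ (which follows from Proposition~\ref{prop:necessary optimality condition}, since $F(\bx_k^\mu) = \|\tau_1(\bx_k^\mu)\|_\frob$ and $F(\bx_k^\mu) = \langle F^\mu(\bx_k^{\mu-1}), x_k^\mu\rangle = \|F^\mu(\bx_k^{\mu-1})\|\cdot\|x_k^\mu\|/(\prod_{\nu\neq\mu}\|x^\nu\|^2)\cdot\dots$) — one gets
\[
\|x_k^\mu\| \prod_{\nu \neq \mu}\|x^\nu\| = \|\tau_1(\bx_k^\mu)\|_\frob = \|F^\mu(\bx_k^{\mu-1})\|.
\]
Then $\|F^\mu(\bx_k^{\mu-1})\| \le \|\mF\|_\frob \prod_{\nu\neq\mu}\|x^\nu\|$ by Cauchy--Schwarz (the partial contraction is a contraction of $\mF$ against the remaining unit-free factors, so $\|F^\mu(\bx_k^{\mu-1})\| \le \|\mF\|_\frob \cdot \prod_{\nu<\mu}\|x_k^\nu\|\prod_{\nu>\mu}\|x_{k-1}^\nu\|$). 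Dividing by $\prod_{\nu\neq\mu}\|x^\nu\|$ gives $\|x_k^\mu\| \le \|\mF\|_\frob$ — but this is not yet the claimed bound; I must convert it. Actually the cleaner route: from $\|x_k^\mu\|\prod_{\nu\neq\mu}\|x^\nu\| = \|F^\mu\| \le \|\mF\|_\frob\prod_{\nu\neq\mu}\|x^\nu\|$ I only get $\|x_k^\mu\|\le\|\mF\|_\frob$, so instead I should use Proposition~\ref{prop:increasing norm of tau}: $\|\tau_1(\bx_k^\mu)\|_\frob \le \|\mF\|_\frob$ (from Proposition~\ref{prop:necessary optimality condition}, since it is a component of the orthogonal decomposition of $\mF$), and then $\|x_k^\mu\| = \|\tau_1(\bx_k^\mu)\|_\frob / \prod_{\nu\neq\mu}\|x^\nu\| \le \|\mF\|_\frob / \prod_{\nu\neq\mu}\|x^\nu\|$. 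Finally, using the already-proved lower bound $\|x^\nu\| \ge \|x_0^\nu\|$ for each $\nu\neq\mu$ (whether the relevant iterate is $x_k^\nu$ or $x_{k-1}^\nu$, monotonicity applies) yields $\prod_{\nu\neq\mu}\|x^\nu\| \ge \prod_{\nu\neq\mu}\|x_0^\nu\|$, hence $\|x_k^\mu\| \le \|\mF\|_\frob \prod_{\nu\neq\mu}\|x_0^\nu\|^{-1}$, which is the claim.

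The main obstacle is bookkeeping: being careful that the norms appearing in the denominator of the update for $x_k^\mu$ are a mix of stage-$k$ and stage-$(k-1)$ iterates, and checking that the lower bound applies uniformly to all of them so that the product in the denominator is bounded below by $\prod_{\nu\neq\mu}\|x_0^\nu\|$. A small additional point is justifying $\|\tau_1(\bx_k^\mu)\|_\frob \le \|\mF\|_\frob$ cleanly — this is just Proposition~\ref{prop:necessary optimality condition} (the Pythagorean identity forces $\|\tau_1(\bx_k^\mu)\|_\frob^2 \le \|\mF\|_\frob^2$) — so no new work is needed. Everything else is elementary manipulation of~\eqref{eq: product of norms} and~\eqref{eq: ALS update}.
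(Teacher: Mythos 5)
Your final argument is correct and is essentially the paper's own proof: the lower bound follows from Proposition~\ref{prop:norm monotonically increasing} together with feasibility of the starting guess, and the upper bound from combining the product formula~\eqref{eq: product of norms} with the Pythagorean bound $\|\tau_1(\bx_k)\|_\frob \le \|\mF\|_\frob$ of Proposition~\ref{prop:necessary optimality condition} and then bounding the other factors' norms below by their initial values. The only difference is cosmetic (you work with the intermediate iterate $\bx_k^\mu$ where the paper uses $\bx_k$, and you correctly discard the Cauchy--Schwarz detour that only yields $\|x_k^\mu\|\le\|\mF\|_\frob$).
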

\begin{proof}
Since $F^1(\bx_0) \neq 0$, we have $\| x^\mu_0 \| > 0$ for all $\mu \ge 2$. Then also $x_0^1 = x_1^1 \neq 0$ (the equality was assumed at the beginning of the section). The inequality $\| x^\mu_0 \| \le \| x^\mu_k \|$ holds by Proposition~\ref{prop:norm monotonically increasing}. Combining with~\eqref{eq: product of norms} and Proposition~\ref{prop:necessary optimality condition} gives
\[
\Bigg(\prod_{\nu \neq \mu} \| x^\nu_0 \| \Bigg) \| x^\mu_k \| \le \| \tau_1(\bx_k) \|_\frob^{} \le \| \mF \|_\frob^{},
\]
that is, the third inequality in the assertion.
\end{proof}

We now turn to the assumptions in Theorems~\ref{th:convergence via Lojasiewicz} and~\ref{th:convergence rate}.% Recall that the function $f$ we consider here is~\eqref{eq:considered function}.

\begin{proposition}\label{prop:function decrease per microstep}
In loop $k$, the decrease in function value of block update $\mu$ satisfies
\[
f(\bx_{k+1}^{\mu-1}) - f(\bx_{k+1}^\mu) =  \frac{\sigma_{k+1}^\mu}{2} \| x^\mu_{k+1} - x^\mu_k \|^2,
\]
where
\[
\sigma_{k+1}^\mu = \| x^1_{k+1} \|^2 \cdots \| x^{\mu-1}_{k+1} \|^2  \cdot  \| x^{\mu+1}_k \|^2 \cdots \| x^d_k \|^2.
\]
\end{proposition}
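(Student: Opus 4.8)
The plan is to compute the difference $f(\bx_{k+1}^{\mu-1}) - f(\bx_{k+1}^\mu)$ directly, exploiting that these two tuples differ only in the $\mu$th slot — $\bx_{k+1}^{\mu-1}$ carries the old factor $x_k^\mu$ while $\bx_{k+1}^\mu$ carries the new factor $x_{k+1}^\mu$. Writing out $f(\bx) = \tfrac12\|\mF\|_\frob^2 - F(\bx) + \tfrac12\|\tau_1(\bx)\|_\frob^2$ and using~\eqref{eq: product of norms} together with~\eqref{eq: mu contraction}, both terms become quadratic/linear in the $\mu$th factor with coefficients built from the frozen norms $\|x^1_{k+1}\|^2,\dots,\|x^{\mu-1}_{k+1}\|^2,\|x^{\mu+1}_k\|^2,\dots,\|x^d_k\|^2$; abbreviating that product as $\sigma_{k+1}^\mu$ and the contraction $F^\mu(\bx_{k+1}^{\mu-1}) = F^\mu(\bx_{k+1}^\mu)$ as $g$ (it does not depend on the $\mu$th slot), one gets
\[
f(\bx_{k+1}^{\mu-1}) = \tfrac12\|\mF\|_\frob^2 - \langle g, x_k^\mu\rangle + \tfrac{\sigma_{k+1}^\mu}{2}\|x_k^\mu\|^2,
\]
and similarly for $\bx_{k+1}^\mu$ with $x_k^\mu$ replaced by $x_{k+1}^\mu$.

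Next I would substitute the ALS update rule~\eqref{eq: ALS update}, which in the present notation reads $x_{k+1}^\mu = g/\sigma_{k+1}^\mu$, i.e. $g = \sigma_{k+1}^\mu x_{k+1}^\mu$. Plugging this in, the two function values become $\tfrac12\|\mF\|_\frob^2 - \sigma_{k+1}^\mu\langle x_{k+1}^\mu, x_k^\mu\rangle + \tfrac{\sigma_{k+1}^\mu}{2}\|x_k^\mu\|^2$ and $\tfrac12\|\mF\|_\frob^2 - \tfrac{\sigma_{k+1}^\mu}{2}\|x_{k+1}^\mu\|^2$ respectively. Subtracting, the $\tfrac12\|\mF\|_\frob^2$ cancels and the remaining terms are exactly $\tfrac{\sigma_{k+1}^\mu}{2}\bigl(\|x_{k+1}^\mu\|^2 - 2\langle x_{k+1}^\mu, x_k^\mu\rangle + \|x_k^\mu\|^2\bigr) = \tfrac{\sigma_{k+1}^\mu}{2}\|x_{k+1}^\mu - x_k^\mu\|^2$, which is the claim.

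Alternatively — and this is really the conceptual content — one can avoid the explicit algebra by invoking Proposition~\ref{prop:necessary optimality condition}: the single-block subproblem is a least-squares problem whose normal equation~\eqref{eq: normal equation} says that the residual at the minimizer $x_{k+1}^\mu$ is $\sigma_{k+1}^\mu$-orthogonal to the update direction, and since the block objective $x^\mu \mapsto f(\dots,x^\mu,\dots)$ is an exact quadratic in $x^\mu$ with Hessian $\sigma_{k+1}^\mu\,I$, the decrease from any point to the minimizer equals $\tfrac{\sigma_{k+1}^\mu}{2}$ times the squared distance to that minimizer; here the "any point" is $x_k^\mu$ and the minimizer is $x_{k+1}^\mu$. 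I expect no real obstacle: the only thing to be careful about is bookkeeping — that $F^\mu$ genuinely does not depend on the $\mu$th argument (stated after~\eqref{eq: norm of taukmu}), so that the same vector $g$ appears in both function-value expressions, and that the frozen-norm factors entering $\sigma_{k+1}^\mu$ match those in the ALS update denominator of Algorithm~\ref{Alg:ALS}. I would present the quadratic-identity version as the main argument and mention the one-line Proposition~\ref{prop:necessary optimality condition} viewpoint as a remark.
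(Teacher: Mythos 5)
Your proposal is correct and essentially matches the paper's proof: the paper argues that the block objective is an exact quadratic in $x^\mu$ with Hessian $\sigma_{k+1}^\mu I$ and vanishing gradient at $x^\mu_{k+1}$, so a Taylor expansion around the minimizer gives the identity — which is precisely your explicit algebraic computation written compactly, and is also the ``alternative'' viewpoint you offer at the end. The bookkeeping points you flag (that $F^\mu$ is independent of the $\mu$th slot and that the denominator in Algorithm~\ref{Alg:ALS} equals $\sigma_{k+1}^\mu$) are exactly the right things to check, and your algebra is sound.
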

\begin{proof}
This is standard least squares theory: the update $x^\mu_{k+1}$ is chosen such that the gradient of the quadratic form $x^\mu \mapsto f(x^1_{k+1},\dots,x^{\mu-1}_{k+1},x^\mu,x^{\mu+1}_{k},\dots,x^d_{k})$ is zero. Its quadratic term is, using~\eqref{eq: product of norms},
\[
x_\mu \mapsto \frac{1}{2}\|\tau_1(x^1_{k+1},\dots,x^{\mu-1}_{k+1},x^\mu,x^{\mu+1}_{k},\dots,x^d_{k}\|_\frob^2 = \frac{\sigma_{k+1}^\mu}{2} \| x^\mu \|^2,
\]
hence the Hessian in every point is $\sigma^\mu_{k+1} I_{n_\mu}$. A Taylor expansion around $x^\mu_{k+1}$ proves the claim.
\end{proof}

\begin{proposition}\label{prop:decrease in function value}
The decrease in function value per outer loop satisfies
\[
f(\bx_k) - f(\bx_{k+1}) \ge \frac{\sigma_0}{2} \| \bx_{k+1} - \bx_{k} \|^2,
\]
where
\[
\sigma_0 = \min_{\mu=1,2,\dots,d} \sigma^\mu_1 > 0.
\]
\end{proposition}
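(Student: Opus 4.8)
The plan is to telescope the per-block decreases from Proposition~\ref{prop:function decrease per microstep} over the inner loop $\mu = 1,\dots,d$ and then bound each coefficient $\sigma_{k+1}^\mu$ from below by the uniform constant $\sigma_0$. First I would write
\[
f(\bx_k) - f(\bx_{k+1}) = \sum_{\mu=1}^d \bigl( f(\bx_{k+1}^{\mu-1}) - f(\bx_{k+1}^\mu) \bigr) = \sum_{\mu=1}^d \frac{\sigma_{k+1}^\mu}{2} \| x^\mu_{k+1} - x^\mu_k \|^2,
\]
using the convention $\bx_{k+1}^0 = \bx_k$ and $\bx_{k+1}^d = \bx_{k+1}$ together with Proposition~\ref{prop:function decrease per microstep}.

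The next step is to observe that, by Proposition~\ref{prop:norm monotonically increasing}, each factor norm $\|x_\ell^\nu\|$ appearing in $\sigma_{k+1}^\mu = \prod_{\nu<\mu}\|x_{k+1}^\nu\|^2 \prod_{\nu>\mu}\|x_k^\nu\|^2$ is at least its value at iteration $1$, namely $\|x_1^\nu\|$ for $\nu<\mu$ and $\|x_0^\nu\| = \|x_1^\nu\|$ for $\nu>\mu$ (recall the standing assumption $x_0^1 = x_1^1$, so in fact $\|x_\ell^\nu\| \ge \|x_1^\nu\|$ for every $\ell \ge 1$ and every $\nu$ whenever the relevant norms are those from step $1$ or later). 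Hence $\sigma_{k+1}^\mu \ge \sigma_1^\mu \ge \sigma_0$ for all $k \ge 1$ and all $\mu$, where $\sigma_0 = \min_\mu \sigma_1^\mu$; and $\sigma_0 > 0$ because each $\|x_1^\nu\| > 0$, which in turn holds since $F^1(\bx_0)\neq 0$ forces $\|x_0^\nu\|>0$ for $\nu\ge 2$ and then the first update gives $x_1^1 \neq 0$ (cf. the proof of Proposition~\ref{prop:key observation}). Plugging this lower bound into the telescoped sum yields
\[
f(\bx_k) - f(\bx_{k+1}) \ge \frac{\sigma_0}{2} \sum_{\mu=1}^d \| x^\mu_{k+1} - x^\mu_k \|^2 = \frac{\sigma_0}{2} \| \bx_{k+1} - \bx_k \|^2,
\]
where the last equality is just the definition of the tuple norm $\|\cdot\|$.

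I do not expect any real obstacle here; the statement is essentially a bookkeeping consequence of the already-established monotonicity of the factor norms. The only point requiring a line of care is the uniform lower bound on the $\sigma_{k+1}^\mu$: one must make sure to compare against the norms at step $1$ (not step $0$, since $x_0^1$ may differ from $x_1^1$ in general — though we have removed this by assuming $x_0^1 = x_1^1$) and to note that $\sigma_0$ is a fixed positive number independent of $k$. Once that is in place, the inequality follows immediately.
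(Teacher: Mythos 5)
Your proof is correct and follows essentially the same route as the paper: telescope the per-block decreases from Proposition~\ref{prop:function decrease per microstep} and bound $\sigma_{k+1}^\mu \ge \sigma_1^\mu \ge \sigma_0 > 0$ via the monotonicity of the factor norms (the paper cites Proposition~\ref{prop:key observation}, which packages the same monotonicity plus positivity). The only blemish is your parenthetical claim that $\|x_0^\nu\| = \|x_1^\nu\|$ for $\nu > \mu$, which is not true in general and also not needed, since $\sigma_1^\mu$ already involves $\|x_0^\nu\|$ for $\nu > \mu$ and monotonicity gives $\|x_k^\nu\| \ge \|x_0^\nu\|$ directly.
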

\begin{proof}
By Proposition~\ref{prop:key observation}, we have $\sigma^\mu_k \ge \sigma^\mu_1 \ge \sigma_0 > 0$ for all $\mu = 1,2,\dots,d$. Building a telescopic sum, Proposition~\ref{prop:function decrease per microstep} yields
\[
f(\bx_k) - f(\bx_{k+1}) = \sum_{\mu=1}^d f(\bx^{\mu-1}_{k+1}) - f(\bx^{\mu}_{k+1}) \ge \frac{\sigma_0}{2} \sum_{\mu=1}^d \| x^\mu_{k+1} - x^\mu_k \|^2 = \frac{\sigma_0}{2} \| \bx_{k+1} - \bx_k \|^2,
\]
as asserted.
\end{proof}

\begin{proposition}\label{prop:minimum stepsize holds}
There exists $\kappa > 0$ such that~\eqref{eq:minimum stepsize safeguard} holds.
\end{proposition}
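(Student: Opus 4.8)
The plan is to bound the entire gradient $\nabla f(\bx_k)$ block by block, exploiting that each ALS micro-step makes exactly one block of the gradient vanish at a point close to $\bx_k$. From the normal equation~\eqref{eq: normal equation}, the $\mu$-th block of the gradient of~\eqref{eq:considered function} at an arbitrary point $\bx$ is
\[
\nabla_\mu f(\bx) = \Bigl( \prod_{\nu \neq \mu} \| x^\nu \|^2 \Bigr) x^\mu - F^\mu(\bx).
\]
Comparing this with the update~\eqref{eq: ALS update}, and recalling that $F^\mu$ does not depend on its $\mu$-th argument, one sees that the step producing $x^\mu_{k+1}$ is nothing but the identity $\nabla_\mu f(\bx_{k+1}^\mu) = 0$, with $\bx_{k+1}^\mu = (x^1_{k+1},\dots,x^\mu_{k+1},x^{\mu+1}_k,\dots,x^d_k)$ as in our notation.

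Next I would use that all iterates stay in a fixed bounded set. By Proposition~\ref{prop:key observation}, for $k \ge 1$ each $x^\mu_k$ lies in the ball of radius $R_\mu := \| \mF \|_\frob \prod_{\nu \neq \mu} \| x^\nu_0 \|^{-1}$, so the points $\bx_k$, $\bx_{k+1}$ and the intermediate points $\bx_{k+1}^\mu$ (whose blocks are among the $x^\nu_j$, $j \in \{k,k+1\}$) all belong to the fixed convex set $B := \{ \bx : \| x^\nu \| \le R_\nu \text{ for all } \nu \}$. Since $f$ is a polynomial, $\nabla f$ is continuously differentiable and hence Lipschitz on $B$, with some constant $L > 0$. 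Using that the $\mu$-th block of $\nabla f$ vanishes at $\bx_{k+1}^\mu$, that $B$ is convex, and that the norm of a single block is dominated by the norm of the whole tuple,
\[
\| \nabla_\mu f(\bx_k) \| = \| \nabla_\mu f(\bx_k) - \nabla_\mu f(\bx_{k+1}^\mu) \| \le \| \nabla f(\bx_k) - \nabla f(\bx_{k+1}^\mu) \| \le L \, \| \bx_k - \bx_{k+1}^\mu \|.
\]

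Finally, the difference $\bx_k - \bx_{k+1}^\mu$ equals $x^\nu_k - x^\nu_{k+1}$ in block $\nu \le \mu$ and vanishes in the remaining blocks, whence $\| \bx_k - \bx_{k+1}^\mu \| \le \| \bx_{k+1} - \bx_k \|$. Squaring and summing over $\mu = 1,\dots,d$ yields $\| \nabla f(\bx_k) \|^2 \le d L^2 \| \bx_{k+1} - \bx_k \|^2$, which is~\eqref{eq:minimum stepsize safeguard} with $\kappa = 1/(L \sqrt{d})$, valid for all $k \ge 1$. I expect the only delicate points to be (i) identifying $\bx_{k+1}^\mu$, rather than the fully updated $\bx_{k+1}$, as the point at which $\nabla_\mu f$ vanishes, and (ii) the fact --- provided by Proposition~\ref{prop:key observation} --- that a \emph{single} Lipschitz constant can be used for all $k$, because all points involved remain in the bounded convex set $B$.
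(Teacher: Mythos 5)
Your proof is correct and follows essentially the same route as the paper's: confine all iterates to a fixed compact (convex) set via Proposition~\ref{prop:key observation}, use a uniform Lipschitz constant $L$ for $\nabla f$ there, exploit $\nabla_\mu f(\bx_{k+1}^\mu)=0$ blockwise, bound $\|\bx_{k+1}^\mu-\bx_k\|\le\|\bx_{k+1}-\bx_k\|$, and sum to get $\kappa = 1/(L\sqrt{d})$. The only (harmless) difference is that you spell out the block gradient from the normal equation and note the convexity of $B$ explicitly, which the paper leaves implicit.
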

\begin{proof}
By Proposition~\ref{prop:key observation}, the iterates $\bx_k^\mu$ (so in particular the $\bx_k$) remain in some compact set $B$ for all $k$. Let $\nabla_\mu f (\bx)$ denote the partial block gradient at $\bx$ with respect to $x^\mu$. As $f$ is continuously differentiable on $B$, there exists $L > 0$ such that
\[\| \nabla_\mu f(\bx) - \nabla_\mu f(\by) \| \le \| \nabla f(\bx) - \nabla f(\by) \| \le L \| \bx - \by \|\] for all $\bx,\by \in B$. Since by construction of the iterates it holds $\nabla_\mu f(\bx_{k+1}^\mu) = 0$, we deduce
\begin{align*}
\| \nabla f(\bx_k) \|^2 = \sum_{\mu = 1}^d \| \nabla_\mu f(\bx_k) \|^2
&= \sum_{\mu = 1}^d \| \nabla_\mu f(\bx_{k+1}^\mu) - \nabla_\mu f(\bx_k) \|^2 \\
&\le L^2 \sum_{\mu=1}^d \| \bx_{k+1}^\mu - \bx_k^{}  \|^2 \\
&\le L^2 \sum_{\mu=1}^d \| \bx_{k+1} - \bx_k \|^2 = L^2 d \| \bx_{k+1} - \bx_k \|^2.
\end{align*}
The second inequality follows from the fact that $\bx_{k+1}^\mu$ shares the first $\mu$ blocks with $\bx_{k+1}$, and the last $d - \mu$ blocks with $\bx_k$. Hence~\eqref{eq:minimum stepsize safeguard} holds with $\kappa = 1 / (L\sqrt{d})$.
\end{proof}

In summary, we obtain our main result.

\begin{theorem}\label{th:main result}
The iterates $(\bx_k)$ generated by Algorithm~\ref{Alg:ALS} converge to a point $\bx_*$ with $\nabla f(\bx_*) = 0$, where $f$ is given by~\eqref{eq:considered function}. The convergence rate estimates~\eqref{eq:rate} in terms of the (a-priori unknown) exponent in the \L{}ojasiewicz gradient inequality~\eqref{eq:Lojasiewicz inequality} at $\bx_*$ apply. 
\end{theorem}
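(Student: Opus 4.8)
The plan is to assemble Theorem~\ref{th:main result} as a direct corollary of Theorems~\ref{th:convergence via Lojasiewicz} and~\ref{th:convergence rate}, using the propositions just established to verify each hypothesis in turn. The function $f$ in~\eqref{eq:considered function} is a polynomial in the entries of $\bx$, hence real-analytic on the finite-dimensional space $V = \R^{n_1} \times \dots \times \R^{n_d}$, so the analyticity requirement of Theorem~\ref{th:convergence via Lojasiewicz} is immediate.

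Next I would check the three quantitative conditions. The minimum-decrease condition~\eqref{eq: minimum decrease} follows by combining Proposition~\ref{prop:decrease in function value}, which gives $f(\bx_k) - f(\bx_{k+1}) \ge \tfrac{\sigma_0}{2}\|\bx_{k+1} - \bx_k\|^2$, with Proposition~\ref{prop:minimum stepsize holds}, which gives $\|\bx_{k+1} - \bx_k\| \ge \kappa \|\nabla f(\bx_k)\|$; substituting the latter into the former yields $f(\bx_k) - f(\bx_{k+1}) \ge \tfrac{\sigma_0 \kappa}{2}\|\nabla f(\bx_k)\|\,\|\bx_{k+1} - \bx_k\|$, i.e.~\eqref{eq: minimum decrease} with $\sigma = \sigma_0\kappa/2$. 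The stationarity implication~\eqref{eq: stationarity condition} also follows from Proposition~\ref{prop:decrease in function value}: if $f(\bx_{k+1}) = f(\bx_k)$, then $\|\bx_{k+1} - \bx_k\|^2 \le \tfrac{2}{\sigma_0}(f(\bx_k) - f(\bx_{k+1})) = 0$, so $\bx_{k+1} = \bx_k$. Thus Theorem~\ref{th:convergence via Lojasiewicz} applies once we know the sequence is bounded.

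Boundedness is supplied by Proposition~\ref{prop:key observation}, which bounds each $\|x^\mu_k\|$ from above by $\|\mF\|_\frob \prod_{\nu\neq\mu}\|x^\nu_0\|^{-1}$; hence $(\bx_k)$ lies in a compact set and, being bounded, has a cluster point, which by Theorem~\ref{th:convergence via Lojasiewicz} is its limit $\bx_*$. Finally, Proposition~\ref{prop:minimum stepsize holds} is exactly the extra hypothesis~\eqref{eq:minimum stepsize safeguard} of Theorem~\ref{th:convergence rate}, so that theorem gives both $\nabla f(\bx_*) = 0$ and the rate estimates~\eqref{eq:rate} in terms of the \L{}ojasiewicz exponent $\theta$ at $\bx_*$.

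There is no real obstacle left at this stage: the entire difficulty of the argument has already been discharged into the preceding propositions (especially the two-sided norm bound of Proposition~\ref{prop:key observation}, which is what makes the unconstrained ALS formulation behave well) and into the cited abstract Theorems~\ref{th:convergence via Lojasiewicz} and~\ref{th:convergence rate}. The only point that deserves a word of care is the remark, already made in Section~\ref{sec:application to ALS}, that assuming $x_0^1 = x_1^1$ loses no generality because $F^1(\bx_0)$ does not depend on $x_0^1$; this ensures the very first micro-step is covered by Proposition~\ref{prop:function decrease per microstep} and the telescoping in Proposition~\ref{prop:decrease in function value} is valid from $k = 0$ onward. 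The proof is therefore a short paragraph that simply cites the relevant numbered results in sequence.
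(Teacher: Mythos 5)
Your proposal is correct and follows exactly the paper's own argument: the paper likewise derives \eqref{eq: minimum decrease} by inserting the stepsize safeguard of Proposition~\ref{prop:minimum stepsize holds} into the quadratic decrease of Proposition~\ref{prop:decrease in function value}, obtains \eqref{eq: stationarity condition} from the same decrease bound, and then invokes Theorems~\ref{th:convergence via Lojasiewicz} and~\ref{th:convergence rate}. Your additional remarks on analyticity and on boundedness via Proposition~\ref{prop:key observation} are details the paper leaves implicit but are correctly supplied.
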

\begin{proof}
As stated in Proposition~\ref{prop:minimum stepsize holds}, relation~\eqref{eq:minimum stepsize safeguard} holds for all $k$ and some $\kappa > 0$. Proposition~\ref{prop:decrease in function value} then implies that both~\eqref{eq: minimum decrease} and~\eqref{eq: stationarity condition} also hold. The result is therefore an instance of Theorems~\ref{th:convergence via Lojasiewicz} and~\ref{th:convergence rate}.
\end{proof}

Without going into detail, we shall not conceal that the appearance of the tensor order $d$ in the constant $\kappa$ obtained in the proof of Proposition~\ref{prop:minimum stepsize holds} may ultimately deteriorate the convergence rate stated in Theorem~\ref{th:convergence rate} for growing $d$. This rate, however, is not explicitly available anyway. Generally speaking, a dependence on the dimensionality has to be taken into account when relying on black-box tools like Theorems~\ref{th:convergence via Lojasiewicz} and~\ref{th:convergence rate}.

Due to the equivalence of ALS and HOPM in the sense of Proposition~\ref{prop: equivalence ALS HOPM}, Theorem~\ref{th:main result} in particular states that the limit $\lambda_* = \| \tau(\bx_*) \|_\frob$ is a singular value of the tensor $\mF$. There is no guarantee that it is the maximum singular value $\lambda^*$. Of course, by the monotonicity of HOPM, we would have $\lambda_* = \lambda^*$, if the starting guess $\lambda_0 = F(\bx_0)/(\|x_0^1\| \| x_0^2 \| \cdots \| x_0^d \|)$ happened to be larger than the second largest critical value (singular value), but ensuring this seems comparably hard as finding $\lambda^*$ itself.

\section{On generalizations to compositions of strongly convex functions with multilinear maps}\label{sec: generalization}

In this section we take a second look at the main arguments used in Sec.~\ref{sec:application to ALS} from an abstract perspective, much in the spirit of~\cite{XuYin2013}. We explain why these arguments do not easily apply to general low-rank tensor optimization tasks by means of cyclic block coordinate descent (BCD), unless regularization is used.

% The main arguments in the convergence proof given in Sec.~\ref{sec:convergence of ALS} can be used in a generalized setting. In~\cite{XuYin2013}, this has been done for a variety of BCD methods for strongly multiconvex functions. Unfortunately, and in contrast to what we showed above for the best rank-one approximation problem, the required assumptions can typically not be verified a priori for optimization in tensor formats of higher rank by means of cyclic BCD, like rank-$r$ PARAFAC ALS (see~\cite{KoldaBader2009}), or corresponding alternating schemes in other tensor formats~\cite{HoltzRohwedderSchneider2012,KoldaBader2009,RohwedderUschmajew2013,OseledetsDolgov2012,Tobler2012}, unless regularization is used. In this last section we shortly explain the issue.

% The point we want to make in this section is that for multiconvex that arise as composition of a strongly convex function with a multilinear function, a prototypical task in low-rank tensor optimization ??? CITE ???, the convergence of the cyclic BCD method with exact updates toward a critical point is equivalent to the boundedness of the iterates during the process.

A generic low-rank optimization problem is the following. One is given a function
\[
J \vcentcolon \R^{n_1\times n_2 \times \dots \times n_d} \to \R,
\]
and a multilinear map
\[
\tau \vcentcolon V^1 \times V^2 \times \cdots \times V^d \to \R^{n_1 \times n_2 \times \cdots \times n_d},
\]
where $V^1,V^2,\dots,V^d$ are finite-dimensional vector spaces. Now denoting the elements of $V^1 \times V^2 \times \dots \times V^d$ by $\bx = (x^1,x^2,\dots,x^d)$, the task is to minimize the function 
\begin{equation}\label{eq: abstract f}
f(\bx) = J(\tau(\bx)) + \frac{\sigma_*}{2} \sum_{\mu=1}^d \| x^\mu \|^2,
\end{equation}
where $\sigma_* \ge 0$ is a regularization parameter. 

The most common examples for $J$ are the squared Euclidean distance $\| \mF - \mX \|_\frob^2$ to a given tensor $\mF$, and the energy functional $\frac{1}{2} \langle \mathsf{A} \mX, \mX \rangle_\frob - \langle \mathcal{B}, \mX \rangle_\frob$ of a ``high-dimensional'' linear system of equations $\mathsf{A} \mX = \mathcal{B}$ with $\mathsf{A}$ being a symmetric positive definite linear operator on $\R^{n_1 \times n_2 \times \dots \times n_d}$. The map $\tau$, on the other hand, represents a low-rank tensor format of some fixed rank. A notable example is the rank-$r$ CP format, which for $d = 3$ reads
\begin{equation}\label{eq: CP}
\tau_r(\bA,\bB,\bC) = \sum_{i=1}^r a_i \tenprod b_i \tenprod c_i
\end{equation}
with $a_i$, $b_i$, and $c_i$ being the columns of $\bA \in \R^{n_1 \times r}$, $\bB \in \R^{n_2 \times r}$, and $\bC \in \R^{n_3 \times r}$, respectively. For $r=1$ we recover~\eqref{eq:rank-one tensors}. Other important examples are the Tucker format, the hierarchical Tucker format, and the tensor train format. We refer to~\cite{GrasedyckKressnerTobler2013,Hackbusch2012,KoldaBader2009} and references therein.

The generalization of Algorithm~\ref{Alg:ALS} to $f$ given by~\eqref{eq: abstract f} is the cyclic BCD method noted in Algorithm~\ref{Alg:cyclic BCD}. 
\begin{algorithm}
\DontPrintSemicolon
\KwIn{Starting guess $\bx_0$.}
$k \leftarrow 0$\;
\While{not converged}{
\For{$\mu = 1,2,\dots,d$}{
$\displaystyle x^\mu_{k+1} \in \argmin_{x^\mu \in V^\mu} f(x^1_{k+1},\dots,x^{\mu-1}_{k+1},x^\mu,x^{\mu+1}_k,\dots,x^d_k)$\;
}
$k \leftarrow k+1$\;
}
\caption{Cyclic BCD for low-rank optimization}\label{Alg:cyclic BCD}
\end{algorithm}
It is feasible whenever $J$ has bounded sub-level sets. We shall investigate to what extent one can prove convergence using the same ideas as in Sec.~\ref{sec:application to ALS}. To this end, we assume that $J$ is real-analytic, convex, and coercive, that is, $\| \mX \|_\frob \to \infty$ implies $J(\mX) \to \infty$. The two above-mentioned examples have this property. Then $f$ in~\eqref{eq: abstract f} is real-analytic, and at least the restriction to every block-variable is convex. For fixed $\mX_0 = \tau(\bx_0)$, let $\mathcal{L}_0 = \{ \mX \vcentcolon J(\mX) \le J(\mX_0) \}$. Letting $\gamma_0 \ge 0$ be a lower spectral bound for the Hessians $\nabla^2 J(\mX)$ on the compact convex set $\mathcal{L}_0$, it follows from Taylor's theorem that
\begin{equation}\label{eq:strong convexity of J}
J(\mY) \ge J(\mX) + \langle \nabla J(\mX), \mY - \mX \rangle_\frob + \frac{\gamma_0}{2} \| \mY - \mX \|_\frob^2
\end{equation}
for all $\mX, \mY$ in $\mathcal{L}_0$. 
%However,~\eqref{eq:strong convexity of J} does not imply that $f$ is strongly convex in every block variable on sub-level sets.
Let us further introduce the quantities
\begin{equation}\label{eq:definition of mkmu}
\sigma^\mu_{k+1} = \min_{x^\mu \neq 0} \frac{\| \tau(x^1_{k+1},\dots,x^{\mu-1}_{k+1},x_{}^\mu,x^{\mu+1}_k,\dots,x^d_k) \|_\frob^2}{\|x^\mu\|^2},
\end{equation}
which are easily identified as the squared minimal singular values of the restricted linear maps $x^\mu \mapsto \tau(x^1_{k+1},\dots,x^{\mu-1}_{k+1},x_{}^\mu,x^{\mu+1}_k,\dots,x^d_k)$ that arise during the iteration. They will play a similar same role as the $\sigma_{k+1}^\mu$ in Proposition~\ref{prop:function decrease per microstep}. Note that if $\max(\gamma_0^{} \sigma_{k+1}^\mu,\sigma_*) > 0$, then the update $x^\mu_{k+1}$ is a unique choice, since in this case the corresponding restricted minimization problem is strongly convex (see~\eqref{eq: abstract function value decrease}).

The new entry $x^\mu_{k+1}$ satisfies $\nabla_\mu f(\bx_{k+1}^\mu) = 0$. Specifically, by the chain rule and the linearity of $\tau$ with respect to every block variable, this implies
\begin{align*}
0 &= \langle \nabla_\mu f(\bx_{k+1}^\mu),  x^\mu_k - x^\mu_{k+1} \rangle \\
&= \langle \nabla J(\tau(\bx_{k+1}^\mu)), \tau(\bx_{k+1}^{\mu-1}) - \tau(\bx_{k+1}^\mu)  \rangle_\frob^{} + \sigma_* \langle x^\mu_{k+1},x^\mu_k - x^\mu_{k+1} \rangle.
\end{align*}
Since all generated tensors $\tau(\bx_k^\mu)$ remain in $\mathcal{L}_0$, it then follows from~\eqref{eq:strong convexity of J} that
\begin{equation}\label{eq: abstract function value decrease}
 f(\bx_{k+1}^{\mu-1}) - f(\bx_{k+1}^\mu) \ge \frac{\gamma_0^{}  \sigma_{k+1}^\mu + \sigma_*}{2} \| x_{k+1}^\mu - x_k^\mu \|^2,
\end{equation}
which is the analog to Proposition~\ref{prop:function decrease per microstep}.

\subsection{The regularized case}

Suppose we have chosen $\sigma_* > 0$. Then we can easily deduce an analog of Proposition~\ref{prop:decrease in function value} from~\eqref{eq: abstract function value decrease}. Further, the sub-level sets of $f$ are bounded when $\sigma_* > 0$ (as $J$ is bounded below). Hence, since $f$ is decreasing, the sequences $(x_k^\mu)$ themselves remain bounded for every $\mu$, which in turn allows to prove an analog of Proposition~\ref{prop:minimum stepsize holds}. These two propositions were sufficient to prove Theorem~\ref{th:main result}, which therefore can be generalized as follows.

\begin{theorem}[{cf. Xu and Yin~\cite[Theorems~2.8 and 2.9]{XuYin2013}}]
Let $J$ be real-analytic, convex, and coercive, and $\tau$ be multilinear as considered above. A sequence $(\bx_k)$ of iterates generated by Algorithm~\ref{Alg:cyclic BCD} for the function $f$ given by~\eqref{eq: abstract f} with $\sigma_* > 0$ is uniquely determined by $\bx_0$, and converges to a point $\bx_*$ satisfying $\nabla f(\bx_*) = 0$. The convergence rate estimates~\eqref{eq:rate} apply correspondingly.  
\end{theorem}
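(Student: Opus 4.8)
The plan is to mirror, line by line, the argument that led to Theorem~\ref{th:main result}, replacing each ingredient about the quadratic least-squares subproblems by its strongly convex analogue provided by~\eqref{eq: abstract function value decrease}. The abstract convergence machinery, Theorems~\ref{th:convergence via Lojasiewicz} and~\ref{th:convergence rate}, only requires three facts: real-analyticity of $f$, the sufficient decrease condition~\eqref{eq: minimum decrease} together with the stationarity implication~\eqref{eq: stationarity condition}, and the lower step-size bound~\eqref{eq:minimum stepsize safeguard}. So the proof reduces to verifying these three items and then quoting the two abstract theorems, exactly as in the proof of Theorem~\ref{th:main result}.

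First, real-analyticity: since $J$ is real-analytic and $\tau$ is multilinear (hence a polynomial, hence real-analytic), the composition $J \circ \tau$ is real-analytic, and adding the polynomial regularizer $\frac{\sigma_*}{2}\sum_\mu \|x^\mu\|^2$ preserves this. Second, I would establish boundedness of the full iterate sequence: because $\sigma_* > 0$ and $J$ is bounded below (being convex and coercive), the sub-level set $\{ \bx : f(\bx) \le f(\bx_0)\}$ is bounded in $V^1 \times \dots \times V^d$; since Algorithm~\ref{Alg:cyclic BCD} is monotone, all $\bx_k$ (and all intermediate $\bx_k^\mu$) lie in this compact set $B$. Third, for the sufficient decrease I would telescope~\eqref{eq: abstract function value decrease} over $\mu = 1,\dots,d$, just as in Proposition~\ref{prop:decrease in function value}; since $\gamma_0 \ge 0$ and $\sigma_* > 0$, each coefficient $\tfrac{1}{2}(\gamma_0 \sigma_{k+1}^\mu + \sigma_*)$ is bounded below by $\sigma_*/2 > 0$, giving
\[
f(\bx_k) - f(\bx_{k+1}) \ge \frac{\sigma_*}{2} \sum_{\mu=1}^d \| x_{k+1}^\mu - x_k^\mu \|^2 = \frac{\sigma_*}{2}\| \bx_{k+1} - \bx_k \|^2 .
\]
This is the analogue of Proposition~\ref{prop:decrease in function value}, with a strictly positive constant that no longer depends on the (potentially vanishing) singular values $\sigma_{k+1}^\mu$ --- which is exactly the point where regularization is doing the work.

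Next, the lower step-size bound~\eqref{eq:minimum stepsize safeguard}: this is verbatim the argument of Proposition~\ref{prop:minimum stepsize holds}. On the compact set $B$ the gradient $\nabla f$ is Lipschitz with some constant $L$; since $\nabla_\mu f(\bx_{k+1}^\mu) = 0$ by the defining optimality of the block update, one writes $\nabla_\mu f(\bx_k) = \nabla_\mu f(\bx_k) - \nabla_\mu f(\bx_{k+1}^\mu)$ and bounds it by $L\|\bx_k - \bx_{k+1}^\mu\| \le L\|\bx_{k+1} - \bx_k\|$ (again using that $\bx_{k+1}^\mu$ agrees with $\bx_{k+1}$ on the first $\mu$ blocks and with $\bx_k$ on the rest), so that $\|\nabla f(\bx_k)\| \le L\sqrt{d}\,\|\bx_{k+1} - \bx_k\|$, i.e.~\eqref{eq:minimum stepsize safeguard} holds with $\kappa = 1/(L\sqrt{d})$. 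Combining~\eqref{eq:minimum stepsize safeguard} with the sufficient decrease inequality above yields~\eqref{eq: minimum decrease} (with $\sigma = \sigma_* \kappa / 2$, say) and also~\eqref{eq: stationarity condition}, since $f(\bx_{k+1}) = f(\bx_k)$ forces $\bx_{k+1} = \bx_k$. Feeding this into Theorems~\ref{th:convergence via Lojasiewicz} and~\ref{th:convergence rate} gives convergence of $(\bx_k)$ to a point $\bx_*$ with $\nabla f(\bx_*) = 0$ and the rate~\eqref{eq:rate}. Uniqueness of the sequence given $\bx_0$ follows from the remark after~\eqref{eq:definition of mkmu}: when $\sigma_* > 0$ each restricted subproblem is strongly convex, so $\argmin$ is a singleton at every inner step.

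The main obstacle --- and it is a mild one --- is confirming that every tensor $\tau(\bx_k^\mu)$ produced during the iteration stays inside $\mathcal{L}_0 = \{\mX : J(\mX) \le J(\mX_0)\}$, which is what licenses the use of the strong-convexity estimate~\eqref{eq:strong convexity of J} with the \emph{uniform} modulus $\gamma_0$ in deriving~\eqref{eq: abstract function value decrease}. This is already asserted in the text preceding~\eqref{eq: abstract function value decrease}, and it holds because each inner update does not increase $f$, hence a fortiori does not increase $J(\tau(\cdot))$ along the step (the regularizer term can only help); but one should state it cleanly. Everything else is a routine transcription of Section~\ref{sec:application to ALS} with $\sigma_* > 0$ supplying the positivity that the unregularized rank-one case got for free from Proposition~\ref{prop:key observation}.
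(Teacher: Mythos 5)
Your proposal is correct and follows essentially the same route as the paper: the paper's proof is precisely the paragraph preceding the theorem, which telescopes~\eqref{eq: abstract function value decrease} using $\gamma_0\sigma_{k+1}^\mu + \sigma_* \ge \sigma_* > 0$ to get the analog of Proposition~\ref{prop:decrease in function value}, uses boundedness of the sub-level sets of $f$ (for $\sigma_*>0$) to get compactness and hence the analog of Proposition~\ref{prop:minimum stepsize holds}, and then invokes Theorems~\ref{th:convergence via Lojasiewicz} and~\ref{th:convergence rate}; your addition of the strong-convexity argument for uniqueness of the iterates matches the remark after~\eqref{eq:definition of mkmu}. One small caveat: your parenthetical justification that the iterates' tensors stay in $\mathcal{L}_0$ (``the regularizer term can only help'') is not sound, since $f$ decreasing does not force $J\circ\tau$ to decrease when the regularizer term shrinks; the clean fix is to note that $J(\tau(\bx_k^\mu)) \le f(\bx_k^\mu) \le f(\bx_0)$ and to define $\gamma_0$ on the (still compact and convex) sub-level set $\{\mX : J(\mX) \le f(\bx_0)\}$ instead of $\mathcal{L}_0$.
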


%As we have only sketched a proof, the reader may refer to~\cite[Theorems~2.8 and 2.9]{XuYin2013}.

\subsection{The non-regularized case}

When $\sigma_* = 0$, we need $\gamma_0 > 0$ (which is always possible if $f$ is strictly convex), but also have to assume that
\begin{equation}\label{eq:lower bound on mkmu}
\liminf_{k,\mu} \sigma_k^\mu = \sigma_0 > 0,
\end{equation}
in order to deduce an analog of Proposition~\ref{prop:decrease in function value} from~\eqref{eq: abstract function value decrease}. As the $\tau(\bx_k^\mu)$ remain bounded, property~\eqref{eq:lower bound on mkmu} then implies, in light of~\eqref{eq:definition of mkmu}, that the sequences $(x_k^\mu)$ also remain bounded for every $\mu$, so that an analog of Proposition~\ref{prop:minimum stepsize holds} again can be established.

\begin{theorem}\label{th:main result generalized}
Let $J$ be real-analytic, strictly convex, and coercive, and $\tau$ be multilinear as considered above. A sequence $(\bx_k)$ of iterates generated by Algorithm~\ref{Alg:cyclic BCD} for the function $f$ given by~\eqref{eq: abstract f} with $\sigma_* = 0$ satisfying~\eqref{eq:lower bound on mkmu} converges to a point $\bx_*$ satisfying $\nabla f(\bx_*) = 0$. The convergence rate estimates~\eqref{eq:rate} apply correspondingly.  
\end{theorem}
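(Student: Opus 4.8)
The plan is to mirror, step by step, the proof of Theorem~\ref{th:main result}, checking that each of the three abstract hypotheses~\eqref{eq: minimum decrease},~\eqref{eq: stationarity condition}, and~\eqref{eq:minimum stepsize safeguard} of Theorems~\ref{th:convergence via Lojasiewicz} and~\ref{th:convergence rate} survives in the general setting under the standing assumptions (real-analytic, strictly convex, coercive $J$; multilinear $\tau$; $\sigma_* = 0$; and~\eqref{eq:lower bound on mkmu}). First I would record that $f$ in~\eqref{eq: abstract f} is real-analytic, since $J$ is real-analytic and $\tau$ is polynomial. Since $J$ is strictly convex on the compact convex set $\mathcal{L}_0$, the lower spectral bound $\gamma_0$ of $\nabla^2 J$ on $\mathcal{L}_0$ is strictly positive, so~\eqref{eq:strong convexity of J}, and hence the microstep decrease~\eqref{eq: abstract function value decrease}, is available with $\gamma_0 > 0$ and $\sigma_* = 0$.

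Next I would establish boundedness of the iterates. Since $f$ is non-increasing along Algorithm~\ref{Alg:cyclic BCD} and all $\tau(\bx_k^\mu)$ stay in $\mathcal{L}_0$, which is bounded by coercivity of $J$, the tensors $\tau(\bx_k^\mu)$ lie in a fixed compact set. Assumption~\eqref{eq:lower bound on mkmu} then upgrades this to boundedness of the individual factor sequences $(x_k^\mu)$: from the definition~\eqref{eq:definition of mkmu} of $\sigma_k^\mu$ we get $\sigma_{k+1}^\mu \| x_{k+1}^\mu \|^2 \le \| \tau(\bx_{k+1}^\mu) \|_\frob^2$, and since the right-hand side is bounded while $\sigma_{k+1}^\mu \ge \sigma_0/2 > 0$ for all large $k,\mu$, the norms $\| x_k^\mu \|$ are bounded above (and, being updated by least-squares-type steps, one also controls them cleanly). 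This plays the role of Proposition~\ref{prop:key observation}. With boundedness in hand, a telescoping-sum argument over $\mu = 1,\dots,d$ applied to~\eqref{eq: abstract function value decrease}, using $\gamma_0 \sigma_{k+1}^\mu + \sigma_* \ge \gamma_0 \sigma_0 / 2 > 0$ for large $k$, yields the analog of Proposition~\ref{prop:decrease in function value}, i.e.~\eqref{eq: minimum decrease} (which in turn implies~\eqref{eq: stationarity condition}). The analog of Proposition~\ref{prop:minimum stepsize holds} then follows verbatim: $f$ is $C^1$ on the compact set containing the iterates, so $\nabla_\mu f$ is Lipschitz there with some constant $L$; since $\nabla_\mu f(\bx_{k+1}^\mu) = 0$ by construction, expanding $\| \nabla f(\bx_k) \|^2 = \sum_\mu \| \nabla_\mu f(\bx_{k+1}^\mu) - \nabla_\mu f(\bx_k) \|^2 \le L^2 \sum_\mu \| \bx_{k+1}^\mu - \bx_k \|^2 \le L^2 d \, \| \bx_{k+1} - \bx_k \|^2$ gives~\eqref{eq:minimum stepsize safeguard} with $\kappa = 1/(L\sqrt{d})$. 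Feeding these into Theorems~\ref{th:convergence via Lojasiewicz} and~\ref{th:convergence rate} delivers convergence of $(\bx_k)$ to a point $\bx_*$ with $\nabla f(\bx_*) = 0$ together with the rate~\eqref{eq:rate}.

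I expect the main obstacle — and the place where the hypotheses~\eqref{eq:lower bound on mkmu} and $\gamma_0 > 0$ are genuinely used, rather than being automatic as in the rank-one case — to be the step deriving boundedness of the factor sequences and the uniform lower bound $\gamma_0 \sigma_{k+1}^\mu + \sigma_* \ge c > 0$. In the rank-one setting Proposition~\ref{prop:norm monotonically increasing} gave monotonicity of $\| x_k^\mu \|$ essentially for free, pinning the $\sigma_{k+1}^\mu$ away from zero; for general $\tau$ (e.g.\ the CP format~\eqref{eq: CP} with $r > 1$) no such monotonicity holds, and the restricted maps $x^\mu \mapsto \tau(\bx_{k+1}^\mu)$ can become arbitrarily ill-conditioned — precisely the phenomenon that forces one to \emph{assume}~\eqref{eq:lower bound on mkmu} here rather than prove it. Once that assumption is granted, however, the argument is a routine transcription of Section~\ref{sec:application to ALS}, and the remaining steps (real-analyticity, the telescoping decrease estimate, the Lipschitz-gradient step) are formal.
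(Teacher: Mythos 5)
Your proposal is correct and follows essentially the same route as the paper, whose ``proof'' is precisely the discussion preceding the theorem: strict convexity gives $\gamma_0>0$ in~\eqref{eq:strong convexity of J}, assumption~\eqref{eq:lower bound on mkmu} turns the microstep estimate~\eqref{eq: abstract function value decrease} into the analog of Proposition~\ref{prop:decrease in function value}, boundedness of the $\tau(\bx_k^\mu)$ in $\mathcal{L}_0$ together with~\eqref{eq:definition of mkmu} bounds the factor sequences so that the analog of Proposition~\ref{prop:minimum stepsize holds} goes through, and Theorems~\ref{th:convergence via Lojasiewicz} and~\ref{th:convergence rate} then apply. Your closing remarks on why~\eqref{eq:lower bound on mkmu} must be assumed rather than proved for $r>1$ also match the paper's own commentary.
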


Condition~\eqref{eq:lower bound on mkmu} can be interpreted as a stability requirement on the used tensor format during the iteration. Unless one finds a good argument to guarantee it in advance (for instance some condition on the starting guess), Theorem~\ref{th:main result generalized} remains an a-posteriori statement of minor practical value. For Algorithm~\ref{Alg:ALS}, the product formula~\eqref{eq: product of norms} and Proposition~\ref{prop:key observation} (which itself is proved using~\eqref{eq: product of norms}) imply~\eqref{eq:lower bound on mkmu}. Unfortunately, a property like~\eqref{eq: product of norms} is a unique feature of rank-one tensors. For none of the aforementioned tensor formats involving notions of higher rank an argument ensuring the stability condition~\eqref{eq:lower bound on mkmu} is currently available. In the case of optimization using rank-$r$ CP tensors~\eqref{eq: CP} with $r > 1$, this may be explained by the fact that the problem itself can be ill-posed~\cite{
DeSilvaLim2008}. Another reason for~\eqref{eq:lower bound on mkmu} to fail can be that the used rank in the multilinear tensor format overestimates the actual rank of the sought solution.

\bibliography{HOPM}
\bibliographystyle{siam}

\end{document}